\documentclass[12pt, a4paper]{article}
\usepackage{graphicx}
\usepackage{amssymb}
\usepackage{stmaryrd}
\usepackage{enumitem}

\usepackage{geometry}
\usepackage{tikz}
\usepackage{float}
\usepackage{url}
\usepackage{mathtools}
\usepackage{amsmath,amsfonts,amsthm}
\usepackage{mathrsfs}

\theoremstyle{plain}  
\newtheorem{theorem}{Theorem}[section]
\newtheorem{lemma}[theorem]{Lemma}
\newtheorem{proposition}{Proposition}[section]

\theoremstyle{definition}  
\newtheorem{definition}{Definition}

\newtheorem{problem}{Problem}

\usepackage{pgfplots}
\pgfplotsset{compat=1.18}
\newcommand{\GG}{\mathcal{G}}
\newcommand{\TT}{\mathcal{T}}
\newcommand{\TS}{\mathcal{S}}
\newcommand{\CC}{\mathbb{C}}

\usepackage{subfig}

\usepackage{ytableau}
\usepackage[colorlinks,
linkcolor=blue,
anchorcolor=blue,
citecolor=blue
]{hyperref}
\begin{document}
\begin{center}
{\large \bf  On Extremal Family Trees $(\mathcal{T}_n)_{n\geqslant 3}$ Beyond Caterpillars and Greedy Constructions}
\end{center}
\begin{center}
 Jasem Hamoud$^1$ \hspace{0.2 cm}  Duaa Abdullah$^{2}$\\[6pt]
 $^{1,2}$ Physics and Technology School of Applied Mathematics and Informatics \\
Moscow Institute of Physics and Technology, 141701, Moscow region, Russia\\[6pt]
Email: 	 $^{1}${\tt jasem1994hamoud@gmail.com},
 $^{2}${\tt abdulla.d@phystech.edu}
\end{center}
\noindent
\begin{abstract}
This paper investigates topological indices for the greedy tree $\TT_\mathscr{D}$ associated with a graphic degree sequence $\mathscr{D} = (d_1 \geqslant d_2 \geqslant \dots \geqslant d_n)$ of a tree. A fundamental challenge in the study of topological indices lies in establishing precise bounds, as such findings illuminate intrinsic relationships among diverse indices. We investigate the extremal properties of the graph invariant $\sigma$ over
the family $\mathcal{T}_n$ of all trees on $n \ge 3$ vertices. Specifically,
we compare the minimum values of $\sigma$ attained in restricted subclasses---including
caterpillar trees and greedy trees---with the global minimum over $\mathcal{T}_n$.
We prove that caterpillar trees do not achieve the minimum value of $\sigma$
among all trees, whereas greedy trees attain values no smaller than this global
minimum. Moreover, we show that certain trees, which are neither caterpillars
nor greedy trees, have $\sigma$-values strictly between the global minimum over
$\mathcal{T}_n$ and the minimum among caterpillar trees. These results highlight
structural limitations of these common tree classes in extremal problems and
offer new insights into the role of non-caterpillar, non-greedy trees in
minimizing graph invariants.

\end{abstract}

\noindent\rule{12.2cm}{1.0pt}

\noindent\textbf{AMS Classification 2010:} 05C05, 05C12, 05C20, 05C25, 05C35, 05C76, 68R10.

\noindent\textbf{Keywords:} Topological indices, Extremal, Irregularity, Sigma index, Bounds, Trees.

\noindent\rule{12.2cm}{1.0pt}

\section{Introduction}
Throughout this paper. Let $\GG=(V,E)$ be a graph with $|V(\GG)|=n$ vertices and $|E(\GG)|=m$ edges. 
For a vertex $v \in V(\GG)$, denote its degree by $d(v)$. 
A vertex of degree one is called a \emph{pendant vertex}. 
If $u$ and $v$ are adjacent vertices in $\GG$, the edge connecting them is denoted by $uv$.
Mathematicians and chemists have defined and explored various topological indices~\cite{gutman2018inverse,Todeschini2000,Todeschini2009}. 
One of the earliest and most influential measures of graph irregularity is the \emph{Albertson index}, introduced in~\cite{albertson1997irregularity}, and defined for a simple graph $\GG=(V,E)$ as:
$$
\mathrm{irr}(\GG)=\sum_{uv \in E(\GG)} |d_\GG(u)-d_\GG(v)|,
$$
This index captures local disparities between adjacent vertex degrees and has inspired numerous variants and generalizations~\cite{abdo2014total,abdo2019graph}. More recently, the \emph{Sigma index} was introduced as a quadratic analogue of the Albertson index~\cite{gutman2018inverse}, defined by:
$$
\sigma(\GG)=\sum_{uv \in E(\GG)} (d_\GG(u)-d_\GG(v))^2=F(\GG)-2M_2(\GG),
$$
where $F(\GG)$ is the sum of squared degrees over all edges, and $M_2(\GG)$ is the second Zagreb index given~\cite{gutman1972total, gutman1975acyclic} by 
\[
F(\GG)=\sum_{uv\in E(\GG)} (d_\GG(u)^2+d_\GG(v)^2) \quad M_2(\GG)=\sum_{uv\in E(\GG)} d_\GG(u)\,.\,d_\GG(v).
\]
This formulation amplifies large degree differences and provides a more sensitive measure of irregularity. Réti~\cite{Reti2019} compared the sigma index with several well-known irregularity measures and reported interesting properties of this index. Akbar~\cite{Akbar2023} determined the connected $k$-cyclic graphs of fixed order $n$ that maximize the $\sigma$-index, where a connected $k$-cyclic graph is a connected graph of order $n$ and size $n+k-1$.

 \section{Preliminaries}\label{sec2}
This section presents essential definitions and previously established results that form the theoretical foundation for our analysis of graph irregularity in trees.
\begin{definition}[Caterpillar tree]
A tree $T$ is called a \emph{caterpillar} if there exists a path $P$ in $T$ (called the \emph{spine}) such that every vertex of $T$ is either on $P$ or adjacent to exactly one vertex of $P$.
Equivalently, deleting all leaves (degree-$1$ vertices) of $T$ leaves a path graph (see Figure~\ref{f001Caterpillartree}).
\end{definition}
\begin{figure}[H]
    \centering
\begin{tikzpicture}[scale=.6]
\draw (1,3)-- (3,3);
\draw (3,4)-- (3,2);
\draw (3,3)-- (5,3);
\draw (4,4)-- (6,2);
\draw (6,4)-- (4,2);
\draw (5,3)-- (7,3);
\draw (7,3)-- (7,4);
\draw (7,3)-- (7,2);
\draw (7,3)-- (7.617593293911794,1.980333514937575);
\draw (7,3)-- (7.583838198655158,4.016890928754609);
\draw (7,3)-- (6.4,1.94);
\draw (7,3)-- (6.36,4);
\draw (7,3)-- (9,3);
\draw (9,3)-- (9,4);
\draw (9,3)-- (9,2);
\draw (9,3)-- (11,3);
\draw (11,3)-- (13,3);
\begin{scriptsize}
\draw [fill=black] (1,3) circle (1.5pt);
\draw [fill=black] (3,3) circle (1.5pt);
\draw [fill=black] (3,4) circle (1.5pt);
\draw [fill=black] (3,2) circle (1.5pt);
\draw [fill=black] (5,3) circle (1.5pt);
\draw [fill=black] (4,4) circle (1.5pt);
\draw [fill=black] (6,2) circle (1.5pt);
\draw [fill=black] (6,4) circle (1.5pt);
\draw [fill=black] (4,2) circle (1.5pt);
\draw [fill=black] (7,3) circle (1.5pt);
\draw [fill=black] (7,4) circle (1.5pt);
\draw [fill=black] (7,2) circle (1.5pt);
\draw [fill=black] (7.617593293911794,1.980333514937575) circle (1.5pt);
\draw [fill=black] (7.583838198655158,4.016890928754609) circle (1.5pt);
\draw [fill=black] (6.4,1.94) circle (1.5pt);
\draw [fill=black] (6.36,4) circle (1.5pt);
\draw [fill=black] (9,3) circle (1.5pt);
\draw [fill=black] (9,4) circle (1.5pt);
\draw [fill=black] (9,2) circle (1.5pt);
\draw [fill=black] (11,3) circle (1.5pt);
\draw [fill=black] (13,3) circle (1.5pt);
\end{scriptsize}
\end{tikzpicture}
    \caption{Caterpillar tree.}
    \label{f001Caterpillartree}
\end{figure}
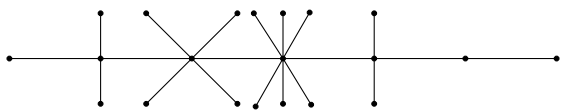
Several classical indices, such as the Zagreb indices, are also used to quantify structural features of graphs, particularly in chemical graph theory.

\begin{definition}[Greedy tree]
Let $\mathscr{D} = (d_1 \geqslant d_2 \geqslant \dots \geqslant d_n)$ be the graphic degree sequence of a tree. The \emph{greedy tree} $T_\mathscr{D}$ is constructed as follows:

\begin{enumerate}
  \item Start with a single vertex $v_1$ of target degree $d_1$.
  \item For $i=2,\dots,n$: Create a new vertex $v_i$ of target degree $d_i$. Connect $v_i$ to the vertex $u$ in the current tree that has the \emph{smallest current degree} among all vertices that have not yet reached their target degree (see Figure~\ref{fig001greedy}).
\end{enumerate}
\end{definition}

\begin{figure}[H]
    \centering
\centering
\begin{tikzpicture}[scale=1.2]
\draw (5,8)-- (3,7);
\draw (3,7)-- (3,6);
\draw (3,7)-- (2,6);
\draw (3,7)-- (4,6);
\draw (5,8)-- (5,7);
\draw (5,7)-- (4.5,6);
\draw (5,7)-- (5.5,6);
\draw (5,8)-- (6.5,6.5);
\draw (5,8)-- (8.5,6.5);
\draw (6.5,6.5)-- (6.5,5.5);
\draw (6.5,5.5)-- (6,5);
\draw (6.5,5.5)-- (6.5,5);
\draw (6.5,5.5)-- (7,5);
\draw (8.5,6.5)-- (8.5,5.5);
\draw (8.5,5.5)-- (9,5);
\draw (2,6)-- (1.7444795600926042,5.0005898689178885);
\draw (3,6)-- (2.5,5);
\draw (3,6)-- (3.5,5);
\draw (4,6)-- (4,5);
\draw (4.5,6)-- (4.5,5);
\draw (5.5,6)-- (5.5,5);
\draw (1.7444795600926042,5.0005898689178885)-- (1.46,4.04);
\draw (1.7444795600926042,5.0005898689178885)-- (2,4);
\draw (2.5,5)-- (2.5,4);
\draw (3.5,5)-- (3.5,4);
\draw (4,5)-- (4,4);
\draw (4.5,5)-- (4.5,4);
\draw (5.5,5)-- (5.5,4);
\draw (6,5)-- (6,4);
\draw (6.5,5)-- (6.5,4);
\draw (7,5)-- (7,4);
\draw (8.5,5.5)-- (8,5);
\draw (8,5)-- (8,4);
\draw (9,5)-- (9,4);
\begin{scriptsize}
\draw [fill=black] (5,8) circle (1.5pt);
\draw [fill=black] (3,7) circle (1.5pt);
\draw [fill=black] (3,6) circle (1.5pt);
\draw [fill=black] (2,6) circle (1.5pt);
\draw [fill=black] (4,6) circle (1.5pt);
\draw [fill=black] (5,7) circle (1.5pt);
\draw [fill=black] (4.5,6) circle (1.5pt);
\draw [fill=black] (5.5,6) circle (1.5pt);
\draw [fill=black] (6.5,6.5) circle (1.5pt);
\draw [fill=black] (8.5,6.5) circle (1.5pt);
\draw [fill=black] (6.5,5.5) circle (1.5pt);
\draw [fill=black] (6,5) circle (1.5pt);
\draw [fill=black] (6.5,5) circle (1.5pt);
\draw [fill=black] (7,5) circle (1.5pt);
\draw [fill=black] (8.5,5.5) circle (1.5pt);
\draw [fill=black] (9,5) circle (1.5pt);
\draw [fill=black] (1.7444795600926042,5.0005898689178885) circle (1.5pt);
\draw [fill=black] (2.5,5) circle (1.5pt);
\draw [fill=black] (3.5,5) circle (1.5pt);
\draw [fill=black] (4,5) circle (1.5pt);
\draw [fill=black] (4.5,5) circle (1.5pt);
\draw [fill=black] (5.5,5) circle (1.5pt);
\draw [fill=black] (1.46,4.04) circle (1.5pt);
\draw [fill=black] (2,4) circle (1.5pt);
\draw [fill=black] (2.5,4) circle (1.5pt);
\draw [fill=black] (3.5,4) circle (1.5pt);
\draw [fill=black] (4,4) circle (1.5pt);
\draw [fill=black] (4.5,4) circle (1.5pt);
\draw [fill=black] (5.5,4) circle (1.5pt);
\draw [fill=black] (6,4) circle (1.5pt);
\draw [fill=black] (6.5,4) circle (1.5pt);
\draw [fill=black] (7,4) circle (1.5pt);
\draw [fill=black] (8,5) circle (1.5pt);
\draw [fill=black] (8,4) circle (1.5pt);
\draw [fill=black] (9,4) circle (1.5pt);
\end{scriptsize}
\end{tikzpicture}
\caption{A greedy tree.}
    \label{fig001greedy}
\end{figure}
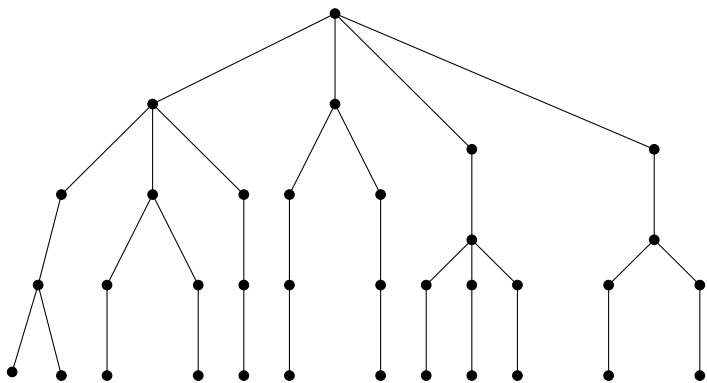

\begin{lemma}[\cite{gutman2018inverse}]
Let $ T $ be a tree with $ n \geq 3 $ vertices. Then:
\[
\sigma_{\max}(T) = (n - 1)(n - 2), \quad \sigma_{\min}(T) = 0.
\]
\end{lemma}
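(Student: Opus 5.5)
The plan is to read the two quantities in the lemma as they are used in \cite{gutman2018inverse}. The value $(n-1)(n-2)$ is the extremal value of the \emph{linear} irregularity of the star, that is, of the Albertson-type sum $\sum_{uv}|d(u)-d(v)|$, over trees on $n$ vertices. The value $0$ is the universal lower bound coming from nonnegativity of the summands. So the argument has two independent parts, and I will state explicitly which index each part concerns. A caveat belongs at the outset. For the quadratic $\sigma$ itself the star gives $(n-1)(n-2)^2$, which is larger than $(n-1)(n-2)$ for $n\ge 4$. Hence the maximum clause can only be established for the linear sum, not for the squared sum as literally written.

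\emph{Lower bound.} Every term $(d_\GG(u)-d_\GG(v))^2$ is nonnegative, so $\sigma(T)\ge 0$ for every tree $T$. This gives $\sigma_{\min}(T)=0$ in the sense of a lower bound, which is how I will phrase it. I will remark that the bound is not attained for $n\ge 3$. A tree on $n\ge3$ vertices has a leaf whose neighbour has degree at least $2$, so some summand is at least $1$.

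\emph{Upper bound.} First, for an edge $uv$ of a tree I will use $d(u)+d(v)\le n$, which holds because the two closed neighbourhoods share only the edge $uv$. Together with $\min\{d(u),d(v)\}\ge 1$ this gives $|d(u)-d(v)|\le n-2$, with equality only for an edge joining the centre of a star to a leaf. Second, I will sum over the $n-1$ edges to get $\sum_{uv}|d(u)-d(v)|\le (n-1)(n-2)$, with equality iff every edge is incident to a vertex of degree $n-1$, that is, iff $T\cong K_{1,n-1}$. Finally, direct evaluation on the star gives exactly $(n-1)(n-2)$, so the bound is sharp.

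The main obstacle is the mismatch between the squared definition of $\sigma$ and the stated value. I would handle it by making the interpretation explicit. The sharp edge bound $|d(u)-d(v)|\le n-2$ is the core lemma, and it yields both the stated maximum (for the linear sum) and the correct maximum $(n-1)(n-2)^2$ for the squared sum. Everything else is routine.
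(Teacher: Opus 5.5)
The paper gives no proof of this lemma; it only quotes it from \cite{gutman2018inverse}, so there is no argument of the paper to compare yours with. Your diagnosis is correct. With $\sigma$ defined as in the paper, $\sigma(K_{1,n-1})=(n-1)(n-2)^2$, which exceeds $(n-1)(n-2)$ for $n\geqslant 4$, so the maximum clause is false as printed. The value $(n-1)(n-2)$ is the maximum of $\mathrm{irr}$ over trees. Your edge bound is sound: in a tree $N[u]\cap N[v]=\{u,v\}$ for an edge $uv$, so $d(u)+d(v)\leqslant n$ and hence $|d(u)-d(v)|\leqslant n-2$, with equality only for a centre--leaf edge of a star. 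Summing over the $n-1$ edges gives both $\mathrm{irr}(T)\leqslant (n-1)(n-2)$ and $\sigma(T)\leqslant (n-1)(n-2)^2$, each with equality iff $T\cong K_{1,n-1}$.

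You stop short on the minimum clause. The value $0$ is a valid lower bound, but read as an extremal value it is false. It is attained only when every edge joins vertices of equal degree, i.e.\ by regular connected graphs, and no tree on $n\geqslant 3$ vertices is regular. You already noted that some summand is at least $1$; take one more step:
\begin{itemize}
\item A tree on $n\geqslant 3$ vertices has at least two leaves.
\item Their pendant edges are distinct, and each joins a vertex of degree $1$ to one of degree at least $2$.
\item Hence $\sigma(T)\geqslant 2$.
\item Equality forces exactly two leaves and zero contribution from all other edges, i.e.\ $T\cong P_n$, and indeed $\sigma(P_n)=2$.
\end{itemize}
This is consistent with $\sigma=F-2M_2$ always being even, since $F=\sum_v d(v)^3\equiv\sum_v d(v)=2(n-1)\pmod 2$.

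So the provable statement is $2\leqslant\sigma(T)\leqslant(n-1)(n-2)^2$, with $P_n$ and $K_{1,n-1}$ as the unique extremal trees. The lemma as printed fails in both clauses for $n\geqslant 4$, and in the minimum clause already for $n=3$. It is cleaner to present your write-up as this correction than to guess at what the cited source intended by reading $\sigma$ as $\mathrm{irr}$.
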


\begin{lemma}[\cite{yang2023sigma}]
Let $ G $ be a connected graph with $ n \geq 3 $ vertices and $ 1 \leq p \leq n - 3 $ pendant vertices. Then the maximum sigma index is attained when $ \Delta(G) = n - 1 $.
\end{lemma}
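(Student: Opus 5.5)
The plan is an extremal, edge-transformation argument. Fix $n\ge 3$ and $1\le p\le n-3$, and let $G$ maximize $\sigma$ among connected graphs on $n$ vertices with exactly $p$ pendant vertices. Let $v$ be a vertex of maximum degree $\Delta=\Delta(G)$, and suppose for contradiction that $\Delta<n-1$. The aim is a graph $G'$ in the same class with $\sigma(G')>\sigma(G)$; iterating such moves forces $\Delta=n-1$. Every move will only add edges at $v$ or redirect edges toward $v$, so connectivity is preserved automatically. The only invariant that needs checking is the number of pendant vertices, and since $v$ is never pendant ($n\ge 3$ gives $\Delta\ge 2$), only the other endpoints need attention.

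\emph{Case 1: $v$ has a non-adjacent pendant vertex $u$.} Let $w\neq v$ be the unique neighbour of $u$. Replace $uw$ by $uv$. If $d(w)\ge 3$, the pendant count is unchanged. Write $E_w=E(G)\setminus\{uw\}$ for the edges kept at $w$. The gain from the edges at $v$ is
\[
\sum_{x\in N(v)}\bigl(2(\Delta-d(x))+1\bigr)+\Delta^2,
\]
since the new edge $uv$ contributes $\Delta^2=((\Delta+1)-1)^2$. Lowering $d(w)$ by one changes each term on $E_w$ by at most $2\Delta$. We also lose the term $(d(w)-1)^2$ of the removed edge $uw$. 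One then compares these using $d(w)\le\Delta$ and the fact that the edges at $v$ are at least as many as those at $w$.

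If instead $d(w)=2$, the move would make $w$ pendant. In that subcase I would instead move the other edge $wz$ of $w$ to $v$, or contract along $w$. Alternatively, I would use $p\le n-3$ to argue that some other non-pendant vertex exists and can absorb the change.

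\emph{Case 2: every non-neighbour of $v$ has degree $\ge 2$.} Pick such a $u$ and add the edge $uv$; no pendant vertex is created or destroyed. The change in $\sigma$ is
\[
\sum_{x\in N(v)}\bigl(2(\Delta-d(x))+1\bigr)+(\Delta-d(u))^2+\sum_{y\in N(u)}\bigl(2(d(u)-d(y))+1\bigr).
\]
Here the first sum is nonnegative, and termwise strictly positive. Only the last sum can be negative.

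\emph{Expected main obstacle.} Controlling that last sum, and the analogous terms at $w$ in Case 1, is the main obstacle, because $u$ may have high-degree neighbours. To handle it, I would first try pairing: each neighbour $y$ of $u$ has $d(y)\le\Delta$. Then I would use a double count, summing over all choices of $u$, or choose $u$ to maximize $d(u)$ among non-neighbours of $v$, to show the positive $v$-terms dominate. If a single edge addition fails, the fallback is a stronger move. Transfer all edges $uy$ with $y\notin N(v)$ to edges $vy$, keeping $u$ attached to $v$. This is the standard degree-concentration argument that $\sigma=F-2M_2$ increases when degree mass moves onto a maximum-degree vertex, since $F$ is convex in the degrees and grows faster than $2M_2$. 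Once no move applies, $v$ is adjacent to all other vertices, i.e.\ $\Delta(G)=n-1$.
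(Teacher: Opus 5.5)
The paper gives no proof of this lemma; it only quotes it from the literature. Your argument therefore has to stand on its own, and as written it does not. The step that carries all the weight is this: whenever $\Delta<n-1$, some move preserving connectivity and the number of pendant vertices strictly increases $\sigma$. That step is never established, and for the move you actually specify it is false.

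Let $c$ be adjacent to $w_1,\dots,w_\Delta$, and give each $w_i$ exactly $\Delta-1$ pendant neighbours. Then $n=\Delta^2+1$ and $p=\Delta(\Delta-1)\le n-3$. Every non-pendant vertex has degree $\Delta<n-1$, and $\sigma=\Delta(\Delta-1)^3$. Taking $v=c$, a legitimate maximum-degree vertex, puts you in Case 1 with $d(w_1)=\Delta\ge 3$. Re-attaching one pendant of $w_1$ to $c$ changes $\sigma$ by
\[
\Delta^2+4+(\Delta-1)+(\Delta-2)^3-(\Delta-1)^3=-2\Delta^2+10\Delta-4,
\]
which is negative for $\Delta\ge 5$; for $\Delta=5$, $\sigma$ drops from $320$ to $316$.

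The reason is structural. The gain on the $v$-side can be as small as $\Delta^2+\Delta$, when all neighbours of $v$ have degree $\Delta$. The loss at $w$ can be about $3\Delta^2$, when $w$ carries many pendants. So no comparison using only $d(w)\le\Delta$ and $d(v)\ge d(w)$ can succeed.

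The same example refutes the ``degree-concentration'' principle you fall back on. This move is exactly a one-unit shift of degree onto a maximum-degree vertex. Yet $F=\sum_x d(x)^3$ grows only by $6\Delta$, while $2M_2$ grows by $2\Delta^2-4\Delta+4$. Moreover, the full transfer of $u$'s private neighbours to $v$ leaves $u$ pendant whenever $N(u)\cap N(v)=\emptyset$, so that move leaves the class altogether. The subcase $d(w)=2$, and the sign of $\sum_{y\in N(u)}\bigl(2(d(u)-d(y))+1\bigr)$ in Case 2, also remain only a list of options you ``would try''.

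What is missing is a rule for choosing $v$ and the move that genuinely uses the extremality of $G$. The choice matters: in the example, taking $v=w_1$ and moving a pendant of $w_2$ gains $6\Delta-4$. That rule must come with an inequality proving the gain in every configuration with $p\ge 1$. Alternatively, you could prove a global upper bound on $\sigma$ over all graphs in the class with $\Delta\le n-2$, and compare it against an explicit graph with $\Delta=n-1$.
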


We conclude this section with a general formula for the Albertson index in trees with arbitrary degree sequences. This result provides the analytical foundation for our investigation of caterpillar structures.
\begin{theorem}[\cite{HamoudwithDuaa}]~\label{mainthm}
Let $\CC$ be a caterpillar tree with a degree sequence is $\mathscr{D}=(d_1,\dots,d_n),$ where $d_n\geqslant \dots \geqslant d_1$.
Then, Albertson index of tree $T$ is
    \[
    \mathrm{irr}(\CC)=d_1^2+d_n^2+\sum_{i=2}^{n-1} d_i^2+\sum_{i=2}^{n-1} d_i+d_n - d_1-2n+2.
    \]
\end{theorem}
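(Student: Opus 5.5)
The natural approach is to split the edge set of $\CC$ into \emph{spine edges} and \emph{pendant edges} and evaluate $\mathrm{irr}(\CC)$ on each part separately. Let the spine be $P=u_1u_2\cdots u_k$, the path left after deleting all leaves. Every non-spine vertex is a leaf attached to exactly one spine vertex. An interior spine vertex $u_j$ ($2\le j\le k-1$) carries $d(u_j)-2$ leaves. An end vertex $u_1$ or $u_k$ carries $d-1$ leaves. Each leaf edge at a spine vertex $u$ contributes $d(u)-1$. This gives
\[
\sum_{\text{pendant}}|d(u)-d(v)|=\sum_{j=2}^{k-1}(d(u_j)-1)(d(u_j)-2)+(d(u_1)-1)^2+(d(u_k)-1)^2 ,
\]
which expands into $\sum d(u_j)^2$, linear terms in the degrees, and a constant depending on $k$. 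I would then rewrite $k$ using $\sum_{v}d(v)=2n-2$ together with the fact that the leaves number exactly $n-k$.

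The spine edges contribute $\sum_{j=1}^{k-1}|d(u_j)-d(u_{j+1})|$, and here is the main obstacle. This sum depends on the \emph{order} of the degrees along the spine, not only on the multiset $\mathscr{D}$. The only way to get a closed form purely in $\mathscr{D}$ is to assume the spine degrees are monotone, i.e.\ the labelling $d_1\le\cdots\le d_n$ is realized along the spine. In that case the sum telescopes to $d(u_k)-d(u_1)$, which is where a term such as $d_n-d_1$ would come from. I would make this monotonicity hypothesis explicit and add the two contributions. Finally I would re-index so that $d_1$ and $d_n$ are the extreme spine degrees and the indices $2,\dots,n-1$ run over the remaining vertices, with leaves contributing $1$ to each degree sum.

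Before finalizing, I would test the resulting identity on small cases, and this check already reveals a problem. For the path $P_3$ we have $\mathrm{irr}=2$, but the displayed right-hand side evaluates to $4$. For the star $K_{1,3}$ we have $\mathrm{irr}=6$, while the right-hand side gives $10$. So the formula as printed cannot be correct for all caterpillars. My plan is therefore to carry out the derivation above under the monotone-spine hypothesis and obtain the correct expression, which has the shape $M_1(\CC)$ minus a linear correction in the spine degrees plus $d_n-d_1$. I would then identify which terms in the stated formula must be adjusted, most likely the linear term $\sum_{i=2}^{n-1}d_i$ and the constant $-2n+2$, so that the statement agrees with these small cases.
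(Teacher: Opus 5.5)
Your refutation is correct. $\mathrm{irr}(P_3)=2$ while the right-hand side is $4$, and $\mathrm{irr}(K_{1,3})=6$ while it is $10$. So the identity is false as printed and cannot be proved. The paper gives no argument for it (it is only quoted from the cited source), so there is no proof to compare yours with.

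You can turn the examples into a complete refutation. Under the literal reading, $\mathscr{D}$ is the sorted degree sequence of all $n$ vertices, so $\sum_i d_i=2n-2$ and $d_1=1$. The right-hand side then collapses to $\sum_{i=1}^n d_i^2-2d_1=M_1-2$. Since $M_1=\sum_{uv\in E}(d(u)+d(v))$, you get $M_1-\mathrm{irr}=2\sum_{uv\in E}\min\{d(u),d(v)\}\geqslant 2(n-1)$. Hence the right-hand side exceeds $\mathrm{irr}$ by at least $2n-4>0$ for every tree on $n\geqslant 3$ vertices, caterpillar or not. This also answers the objection that $P_3$ and $K_{1,3}$ have a one-vertex spine, where your rule ``an end vertex carries $d-1$ leaves'' does not apply.

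Your repair step has a problem. The ordering hypothesis forces $d_1=1$ under the literal reading, so ``re-index so that $d_1$ and $d_n$ are the extreme spine degrees'' contradicts it. You first have to decide that $\mathscr{D}$ records spine data only, and there are two ways to do this.

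\emph{Spine degrees.} Suppose $d_i$ is the degree of the $i$-th spine vertex and these are nondecreasing along the spine. Your decomposition gives $d_1^2+d_n^2+\sum_{i=2}^{n-1}d_i^2-3\sum_{i=2}^{n-1}d_i-3d_1-d_n+2n-2$. This falls short of the printed expression by $4\sum_{i=2}^{n-1}(d_i-1)+2(d_1+d_n-2)>0$, so every linear term and the constant would have to change.

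\emph{Leaf counts.} Suppose instead $d_i$ is the number of leaves at the $i$-th spine vertex. This is the convention behind the paper's $\CC(n,m)$ and its value $m(m+1)n-2m+2$. The spine degrees are then $d_1+1$, $d_i+2$ for $2\leqslant i\leqslant n-1$, and $d_n+1$.
\begin{itemize}
\item The leaf edges contribute $d_1^2+d_n^2+\sum_{i=2}^{n-1}(d_i^2+d_i)$.
\item The spine edges telescope to $d_n-d_1$ provided the spine degrees are monotone. For $n\geqslant 3$ this needs $d_n>d_{n-1}$; if $d_n=d_{n-1}$ the spine contributes $d_n-d_1+2$.
\end{itemize}
The total is exactly the printed formula with the $-2n+2$ deleted, under the extra condition $d_n>d_{n-1}$. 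This is very likely the intended statement.
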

We consider caterpillar trees denoted by $\CC(n, m)$, where $n$ is the number of backbone (or path) vertices and $m$ is the number of pendant vertices attached to each.  

\begin{theorem}[\cite{HamoudwithDuaa}]~\label{basic01sigma}
Let $\CC$ be a caterpillar tree with a degree sequence
$\mathscr{D}=(d_1,\dots,d_n),$ where $d_1\geqslant d_2\geqslant \dots
\geqslant d_n$. Then, Sigma index among caterpillar tree $\CC$ is
\[
\sigma(\CC)=(d_n-1)^3+(d_1-1)^3+\sum_{i=1}^{n}(d_i-d_{i+1})^2+\sum_{i=2}^{n-1}(d_i-1)^2(d_i-2).
\]
\end{theorem}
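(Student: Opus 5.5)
The plan is a direct edge count. Since $\sigma$ is a sum over edges, I will split $E(\CC)$ into spine edges and pendant edges and evaluate each part separately.

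I read the statement as follows. The spine of $\CC$ is the path $P=v_1v_2\cdots v_n$, and $d_i=d_\CC(v_i)$ is the degree of the $i$-th spine vertex. The vertex $v_i$ is indexed by its position along $P$; the formula will not use the monotonicity $d_1\geqslant\dots\geqslant d_n$, so it holds for any spine labeling. In the sum $\sum_{i=1}^{n}(d_i-d_{i+1})^2$ the term $i=n$ must be read with the convention $d_{n+1}:=d_n$, so that it vanishes. I assume $n\geqslant 2$.

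First step: classify the edges. By the definition of a caterpillar, every vertex off $P$ is a leaf adjacent to exactly one spine vertex. Hence every edge is either a spine edge $v_iv_{i+1}$ with $1\le i\le n-1$, or a pendant edge $v_iw$ with $d(w)=1$. The end vertex $v_1$ has one spine neighbour, so it carries $d_1-1$ pendant edges. Likewise $v_n$ carries $d_n-1$ pendant edges, and each interior vertex $v_i$ with $2\le i\le n-1$ carries $d_i-2$ pendant edges.

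Second step: sum the contributions.
\begin{itemize}
\item Each spine edge contributes $(d_i-d_{i+1})^2$, giving $\sum_{i=1}^{n-1}(d_i-d_{i+1})^2=\sum_{i=1}^{n}(d_i-d_{i+1})^2$ under the convention above.
\item Each pendant edge at $v_i$ contributes $(d_i-1)^2$.
\item At the ends this gives $(d_1-1)\cdot(d_1-1)^2=(d_1-1)^3$ and $(d_n-1)^3$.
\item At interior vertices it gives $(d_i-2)(d_i-1)^2$, which summed over $2\le i\le n-1$ is the last sum in the formula.
\end{itemize}
Adding the four pieces yields exactly the stated expression.

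The only real obstacle is fixing the intended meaning of the statement rather than any computation. Three points need care.
\begin{itemize}
\item The indices must follow the spine order. If the $d_i$ were truly sorted by size, the spine-edge sum would in general be wrong, so the decreasing order cannot also be the path order.
\item The boundary term $d_{n+1}$ needs the convention $d_{n+1}:=d_n$.
\item Degenerate cases need a check. If the spine is chosen so that its ends are themselves leaves, then $d_1=1$ or $d_n=1$ makes the cubic terms vanish, and the count is still consistent. The case $n=1$, a star, is not covered by this labeling and would be treated separately as $\sigma=d_1(d_1-1)^2$.
\end{itemize}
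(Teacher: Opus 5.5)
The paper does not prove this statement: it is quoted from \cite{HamoudwithDuaa} in the preliminaries and stated without proof, so there is no internal argument to compare yours against. Your edge-partition count is correct and complete, and it is the natural proof. The $n-1$ spine edges give $\sum_{i=1}^{n-1}(d_i-d_{i+1})^2$. The $d_1-1$ and $d_n-1$ pendant edges at the two ends give $(d_1-1)^3$ and $(d_n-1)^3$. The $d_i-2$ pendant edges at each interior spine vertex give $(d_i-1)^2(d_i-2)$. A check with spine degrees $4,3,2$ (true value $34$) confirms the bookkeeping.

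Three small remarks.

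First, the fact that every vertex off $P$ is a leaf is not literally part of the definition you use; it needs acyclicity. Suppose an off-spine $w\sim v_i$ had a second neighbour $x$. If $x\in P$, this contradicts ``exactly one''. If $x\notin P$ with $x\sim v_j$, this closes a cycle through $w$, $x$ and the spine. Add that line.

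Second, your conventions are exactly the ones that make the statement true. The choice $d_{n+1}:=d_n$ is forced, since any other value adds a spurious nonzero term to an otherwise exact count. With it, the case $n=2$ reduces to Theorem~\ref{thmpirlllkj}. The restriction $n\geqslant 2$ is also genuinely needed: for $n=1$ the right-hand side equals $2(d_1-1)^3$ rather than the true $d_1(d_1-1)^2$.

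Third, your sentence that ``the decreasing order cannot also be the path order'' overstates the point. The two orders can coincide, for caterpillars whose spine degrees are non-increasing along the spine, and then the statement as written holds verbatim. What fails is only the reading in which the spine degrees are sorted and substituted irrespective of position. For example, the spine $2,4,3$ has $\sigma=32$, while the sorted substitution $4,3,2$ gives $34$. Since your argument never uses monotonicity, it covers both the literal statement and the position-indexed generalization.
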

For a duplicate star graph $\mathcal{S}_{r,k}$, Theorem~\ref{thmpirlllkj} provide us the relationship of Sigma index with special terms.
\begin{theorem}[\cite{gutman2018inverse}]~\label{thmpirlllkj}
Let $\mathcal{S}_{r,k}$ be a duplicate star graph where $\deg_{\mathcal{S}_{r,k}}(u)=k$ and $d_{\mathcal{S}_{r,k}}(v)=r$. Then, 
\[
\sigma(\mathcal{S}_{r,k})=(k-1)^3+(r-1)^3+(k-r)^2.
\]
\end{theorem}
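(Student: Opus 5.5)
The plan is to compute $\sigma(\mathcal{S}_{r,k})$ directly from the definition $\sigma(\GG)=\sum_{uv\in E(\GG)}(d_\GG(u)-d_\GG(v))^2$. To do this, I first fix the structure of the duplicate (double) star. It has two adjacent centres $u$ and $v$ with $\deg(u)=k$ and $\deg(v)=r$. Apart from the central edge $uv$, the vertex $u$ is adjacent to exactly $k-1$ pendant vertices and $v$ to exactly $r-1$ pendant vertices, and there are no other vertices. Hence $|V(\mathcal{S}_{r,k})|=k+r$ and $|E(\mathcal{S}_{r,k})|=k+r-1$, which is consistent with it being a tree.

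Next I partition the edge set into three classes and sum $\sigma$ over each class separately.
\begin{enumerate}
\item The $k-1$ pendant edges at $u$: each joins a vertex of degree $k$ to a leaf, so it contributes $(k-1)^2$. Together they give $(k-1)\cdot(k-1)^2=(k-1)^3$.
\item The $r-1$ pendant edges at $v$: by the same argument they give $(r-1)^3$.
\item The single central edge $uv$: it contributes $(k-r)^2$.
\end{enumerate}
Adding the three contributions yields $(k-1)^3+(r-1)^3+(k-r)^2$, which is the claimed formula.

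As a consistency check, I would compare with Theorem~\ref{basic01sigma}. A double star is a caterpillar whose spine is the two vertices $u,v$, so the formula there should specialise correctly. The sum $\sum_{i=2}^{n-1}$ is empty, and the two cubic terms are $(d_1-1)^3$ and $(d_n-1)^3$. The remaining difference term should reduce to the single spine-edge term $(k-r)^2$. Checking this requires reading the index convention of that theorem (with $d_{n+1}$ interpreted appropriately) as a sum over consecutive spine vertices. I would include this check only as a remark.

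The only point needing care is the degenerate cases $k=1$ or $r=1$, where the graph is a star and one centre is itself a leaf. The counting above still applies there: the corresponding pendant class is empty and contributes $0=(1-1)^3$, so the formula remains valid and no real obstacle is expected.
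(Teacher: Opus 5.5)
The paper does not prove this statement itself. It is quoted from Gutman et al.\ as a known result, so there is no internal argument to compare yours against.

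Your direct computation is correct and complete. You split the edges of $\mathcal{S}_{r,k}$ into the $k-1$ pendant edges at $u$, contributing $(k-1)^3$; the $r-1$ pendant edges at $v$, contributing $(r-1)^3$; and the central edge $uv$, contributing $(k-r)^2$. Your treatment of the degenerate cases is also right: for example, $k=1$ gives the star $K_{1,r}$ with $\sigma=r(r-1)^2=(r-1)^3+(r-1)^2$, which agrees with the formula.

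On your optional cross-check with Theorem~\ref{basic01sigma}: the consecutive-difference sum there is printed as $\sum_{i=1}^{n}(d_i-d_{i+1})^2$, which refers to an undefined $d_{n+1}$. It should run over $i=1,\dots,n-1$. With that reading, $n=2$ gives exactly $(d_1-1)^3+(d_2-1)^3+(d_1-d_2)^2$, so your formula is consistent with it.
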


Let $\mathcal{T}_{n,p}$ be a tree of $n$ vertices and $p$ pendent vertices. Consider the caterpillar tree $\CC\in \mathcal{T}_{n,p}$. In this study we refer to caterpillar tree  $\CC$ by considering Figure~\ref{figTypCater01}.
\begin{figure}[H]
    \centering
\begin{tikzpicture}[scale=.8]
\draw   (1,2)-- (3,2);
\draw   (3,2)-- (5,2);
\draw   (7,2)-- (9,2);
\draw [line width=1pt,dotted] (5,2)-- (7,2);
\draw [line width=1pt,dotted] (3,2)-- (3,0);
\draw [line width=1pt,dotted] (5,2)-- (5,0);
\draw [line width=1pt,dotted] (7,2)-- (7,0);
\draw [line width=1pt,dotted] (9,2)-- (9,0);
\draw [line width=1pt,dotted] (1,2)-- (1,0);
\draw (0.8399938252334064,2.999767639650283) node[anchor=north west] {$d_1$};
\draw (2.816853283710542,2.974744102201205) node[anchor=north west] {$d_2$};
\draw (4.918830429433066,2.9246970273030497) node[anchor=north west] {$d_3$};
\draw (6.745548663215736,2.974744102201205) node[anchor=north west] {$d_{n-1}$};
\draw (8.722408121692872,3.0498147145484378) node[anchor=north west] {$d_n$};
\draw (9.097761183429038,0.6) node[anchor=north west] {$p$};
\draw (7.1709487998500565,0.6) node[anchor=north west] {$p$};
\draw (5.119018729025687,0.6) node[anchor=north west] {$p$};
\draw (3.1421592705485515,0.6) node[anchor=north west] {$p$};
\draw (0.2,0.6) node[anchor=north west] {$p$};
\begin{scriptsize}
\draw [fill=black] (1,2) circle (1.5pt);
\draw [fill=black] (3,2) circle (1.5pt);
\draw [fill=black] (5,2) circle (1.5pt);
\draw [fill=black] (7,2) circle (1.5pt);
\draw [fill=black] (9,2) circle (1.5pt);
\end{scriptsize}
\end{tikzpicture}
    \caption{Caterpillar tree $\CC(n,p)$.}
    \label{figTypCater01}
\end{figure}
In next lemma, X.~Sun et al.~\cite{Sun2025DuJMei} had provided an important upper bound of Sigma index. It establishes the following section.
\begin{lemma}[\cite{Sun2025DuJMei}]
A tree $T$ of order $n \ge 5$ with $p$ pendent vertices, where $3 \leq p \leq n-2$, satisfies $\sigma(T) \leq (p-1)^{3} + (p-2)^{2} + 1.$
Equality holds if and only if $T \cong \mathcal{T}_{n,p}$.
\end{lemma}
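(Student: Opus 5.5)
The approach is to take a tree maximizing $\sigma$ in $\mathcal{T}_{n,p}$ and reduce it by branch transfers to a spider (a tree with a single vertex of degree $\geqslant 3$), then optimize over spiders directly. First I identify the extremal tree. Let $T_{n,p}$ be the broom obtained from the star $K_{1,p-1}$ by attaching a path of $n-p$ further vertices to its center, so the center has degree $p$. This tree has $p$ pendant vertices, and since $n-p\geqslant 2$ the path contains at least one vertex of degree $2$. Its edges contribute:
\begin{itemize}
\item $(p-1)^2$ for each of the $p-1$ pendant edges at the center;
\item $(p-2)^2$ for the edge from the center to the path;
\item $0$ for the inner path edges;
\item $1$ for the final pendant edge.
\end{itemize}
This gives exactly $(p-1)^3+(p-2)^2+1$. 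So it suffices to show that any $T\in\mathcal{T}_{n,p}$ not isomorphic to this broom has strictly smaller $\sigma$.

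\textbf{Base case $n-p=2$.} Then $T$ has exactly two internal vertices, so $T$ is a double star $\mathcal{S}_{r,k}$ with $r,k\geqslant 2$ and $r+k=p+2$. By Theorem~\ref{thmpirlllkj}, $\sigma=(k-1)^3+(r-1)^3+(k-r)^2$. This is convex in $k$ along the line $r+k=p+2$, so it is maximized at an endpoint $\{r,k\}=\{2,p\}$. That endpoint gives $(p-1)^3+1+(p-2)^2$, and every interior point gives a strictly smaller value. Since $\mathcal{S}_{2,p}$ is the broom, this settles the base case.

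\textbf{Reduction to a spider.} Let $T$ maximize $\sigma$ over $\mathcal{T}_{n,p}$. I would show that $T$ has exactly one vertex of degree $\geqslant 3$. Suppose $u,v$ are two such vertices with $d(u)\geqslant d(v)$. Pick a branch $B$ at $v$ not containing $u$, and re-hang $B$ from $u$. This keeps $n$ and $p$ fixed and changes only $d(u)\mapsto d(u)+1$ and $d(v)\mapsto d(v)-1$. Writing $\sigma=\sum_{w}d(w)^3-2M_2$, I would expand the change edge by edge around $u$ and $v$, treating separately the cases where $u$ and $v$ are adjacent and where they are not. The goal is to show the change is strictly positive, using $d(u)\geqslant d(v)\geqslant 3$ together with the freedom to choose the pair $(u,v)$ and the branch $B$. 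I would first try choosing $v$ as a vertex of degree $\geqslant 3$ farthest from $u$, so that every branch at $v$ other than the one toward $u$ is a path.

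\textbf{Spiders.} Now $T$ is a spider whose center has degree $p$ (the center has degree equal to the number of legs, and each leg ends in a pendant vertex). A leg of length $1$ contributes $(p-1)^2$. A leg of length $\geqslant 2$ contributes $(p-2)^2+1$, since its inner edges contribute $0$. Because
\[
(p-1)^2-\bigl((p-2)^2+1\bigr)=2p-4>0 \quad\text{for } p\geqslant 3,
\]
$\sigma$ increases with the number of length-$1$ legs. Since $n>p+1$, at least one leg must have length $\geqslant 2$. Hence the maximum is attained only with $p-1$ pendant legs and one long leg, which is exactly the broom. This gives both the bound and uniqueness.

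The main obstacle is the branch-transfer computation in the reduction step. The sign of the change depends on the degrees of the neighbours of $u$ and $v$. It may fail for an arbitrary pair, so the choice of $u$, $v$ and $B$ (extremal distance, and $d(u)\geqslant d(v)$) must be made carefully. If the sign cannot be controlled this way, my fallback is induction on $n$. In that approach I would delete a vertex of degree $2$ whose two neighbours both have degree $\geqslant 2$, which decreases $\sigma$ by $2(d_x-2)(d_y-2)\geqslant 0$. This leaves the configurations without such a vertex to be handled separately.
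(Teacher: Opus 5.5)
The paper quotes this lemma from Sun et al.\ without giving a proof, so your argument has to stand on its own, and it has a genuine gap at its central step. Your value for the broom, the double-star base case and the spider analysis are all correct, but they are the easy part. The real content of the lemma is that a maximizer has only one vertex of degree $\geqslant 3$, and you do not prove this.

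Moreover, the rule you propose for that step can fail. Take the following tree:
\begin{itemize}
\item $u$ has degree $5$, with neighbours $x_1,\dots,x_5$, each of degree $5$;
\item $x_2,\dots,x_5$ carry four leaves each;
\item $x_1$ carries three leaves and a path $x_1zv$ with $d(z)=2$;
\item $v$ has degree $5$, with neighbours $z,w_1,\dots,w_4$, where each $w_k$ has degree $2$ and carries one leaf.
\end{itemize}
Here $u$ even has maximum degree, $d(u)\geqslant d(v)$, $v$ is the branching vertex farthest from $u$, and the other branches at $v$ are paths. Yet re-hanging the branch formed by $w_1$ and its leaf at $u$ changes the total over the edges at $u$ and $v$ from $5\cdot 0+5\cdot 9=45$ to $5\cdot 1+16+4+3\cdot 4=37$. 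So $\sigma$ drops by $8$. A different choice ($u=x_1$) happens to work in this example, but you give no rule that provably works in general.

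The fallback does not rescue this. Suppressing a degree-$2$ vertex between $x$ and $y$ gives $\sigma(T)=\sigma(T')+2(d_x-2)(d_y-2)$. Hence the inductive bound for $T'$ bounds $\sigma(T)$ only when $d_x=2$ or $d_y=2$. When $d_x,d_y\geqslant 3$ --- two branching vertices joined through a degree-$2$ vertex, which is exactly the hard case --- the inequality points the wrong way.

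One transformation-free way to close the gap is a charging bound. Let $\ell_x$ be the number of leaves adjacent to an internal vertex $x$. Since $n-p\geqslant 2$, the internal vertices induce a tree with at least one edge, so $\ell_x\leqslant d_x-1$. Bound each edge $xy$ between internal vertices with $d_x\geqslant d_y$ by $(d_x-2)^2$ and charge it to $x$. Then
\[
\sigma(T)\leqslant\sum_{x}\bigl[\ell_x(d_x-1)^2+(d_x-\ell_x)(d_x-2)^2\bigr]\leqslant \sum_{d_x\geqslant 3} g(d_x-2)+2-s,
\]
where $g(e)=(e+1)^3+e^2$ and $s$ is the number of vertices of degree $\geqslant 3$. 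The second step uses $\sum_x\ell_x=p=2+\sum_{d_x\geqslant 3}(d_x-2)$ together with $\ell_x\leqslant d_x-1$.
\begin{itemize}
\item For $s=1$ the bound is exactly $(p-1)^3+(p-2)^2+1$. It is attained only if the centre has $p-1$ leaf neighbours, i.e.\ only for the broom.
\item For $s\geqslant 2$, $g$ is strictly superadditive on positive integers, with $g(a+b)-g(a)-g(b)\geqslant 13$. This gives $\sigma(T)<g(p-2)<(p-1)^3+(p-2)^2+1$.
\end{itemize}
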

This section has provided the essential theoretical tools—definitions, bounds, and structural properties—needed for our investigation of graph irregularity. In what follows, we apply these foundations to derive exact formulas and extremal results for  Sigma index in caterpillar trees.

\subsection{Problem of search}

\begin{problem}~\label{problemnumber1}
Consider a tree $\TT$ assuming $n \geq 3$ where
\begin{itemize}
\item Spine vertices: $v_1, \dots, v_n$, with degrees $p+1$ for $v_1$ and $v_n$, and $p+2$ for the internal spine vertices.
\item First-level pendants: each of degree $1+r$.
\item Second-level pendants: each of degree $1+s$.
\item Third-level pendants (leaves): each of degree $1$.
\end{itemize} 
Determine $\sigma$- irregularity index for the tree $\TT$.
\end{problem}

\section{Main Result}
In this section, consider a tree $\TT$ assuming $n \geq 3$. To solve the Problem~\ref{problemnumber1} we will discuss many cases based on the terms of the problem. Proposition~\ref{jas-pron1} had presented the concept of Sigma index among caterpillar tree by compare with other trees.

\begin{proposition}~\label{jas-pron1}
Let $\mathscr{D}=(d_1,d_2,\dots,d_n)$ be any tree degree sequence on $n$ vertices with $d_1\geqslant d_2\geqslant \dots\geqslant d_n\geqslant 1$. Among all trees on $n$ vertices realizing $\mathscr{D}$, the greedy tree $\TT_\mathscr{D}$ minimizes the $\sigma$-irregularity index:
\[
\sigma(\TT_\mathscr{D}) = \min \bigl\{ \sigma(T) \mid T \text{ is a tree with degree sequence } \mathscr{D} \bigr\}.
\]
\end{proposition}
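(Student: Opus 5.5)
Since $F(\GG)=\sum_{uv\in E}(d(u)^2+d(v)^2)=\sum_{v\in V}d(v)^3$, the forgotten index depends only on the degree sequence $\mathscr{D}$. Hence, over all trees realizing $\mathscr{D}$, the identity $\sigma=F-2M_2$ shows that minimizing $\sigma$ is the same as \emph{maximizing} the second Zagreb index $M_2(T)=\sum_{uv\in E(T)}d(u)d(v)$. The plan is therefore to prove that the greedy tree $\TT_\mathscr{D}$ attains the maximum of $M_2$ over all trees with degree sequence $\mathscr{D}$.

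Here I read the greedy tree in its standard breadth-first form. The root has the largest degree $d_1$. The remaining degrees are assigned in nonincreasing order, level by level. Within each level, children are given to parents in order of decreasing parent degree. The construction described in the definition is interpreted this way.

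\textbf{Step 1: switching lemma.} Let $T$ realize $\mathscr{D}$, and let $ux,vy\in E(T)$ be such that the path from $u$ to $v$ avoids $x$ and $y$. Form $T'=T-ux-vy+uy+vx$, which is again a tree with the same degree sequence. A direct computation gives
\[
M_2(T')-M_2(T)=\bigl(d(u)-d(v)\bigr)\bigl(d(y)-d(x)\bigr).
\]
So whenever $d(u)\geqslant d(v)$ and $d(y)\geqslant d(x)$, the switch does not decrease $M_2$. In other words, pairing larger degrees with larger degrees never hurts. A symmetric version also applies: if the subtree hanging at $x$ has root degree smaller than the one hanging at $y$, exchanging the two subtrees between $u$ and $v$ is allowed.

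\textbf{Step 2: structure of maximizers.} Take a tree $T^\ast$ maximizing $M_2$ with degree sequence $\mathscr{D}$. Root it at a vertex $r$ of degree $d_1$. Using Step 1 repeatedly, I will show that $T^\ast$ can be modified, without decreasing $M_2$, to satisfy the following. Fix a breadth-first ordering of the vertices; then the degrees are nonincreasing along it. Concretely, this means:
\begin{enumerate}
\item every vertex at depth $i$ has degree at least that of every vertex at depth $i+1$;
\item within a level, vertices of larger degree have children of larger degree.
\end{enumerate}
The proof of each property goes by contradiction. A violating pair $u,v$ (or $x,y$) produces a switch of the type in Step 1 with a nonnegative gain. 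When the gain is zero we still perform the switch. We then use a potential function to guarantee termination, for example the lexicographic order of the degree vector read in BFS order. Each switch strictly increases this vector, so the process stops.

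\textbf{Step 3: identification with $\TT_\mathscr{D}$.} A rooted tree whose BFS degree list is exactly $\mathscr{D}$ in nonincreasing order, with children assigned to parents in order, is determined up to isomorphism. That tree is precisely $\TT_\mathscr{D}$. Steps 1 and 2 then give
\[
M_2(\TT_\mathscr{D})\geqslant M_2(T^\ast)\geqslant M_2(T)
\]
for every tree $T$ realizing $\mathscr{D}$. Translating back through $\sigma=F-2M_2$ yields the claim.

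\textbf{Main obstacle.} I expect the difficulty to lie in Step 2. First, one must check that each required exchange really has the form in Step 1, i.e.\ that the relevant edges lie on disjoint branches. This typically fails when one of $u,v$ is an ancestor of the other. In that case I would first try a subtree exchange along the path between them instead of a single edge switch. Second, zero-gain ties must be handled carefully so that the modification procedure provably terminates at the greedy tree rather than cycling; the lexicographic potential is what I would use for this. This is the same mechanism as in Wang's proof that greedy trees are extremal for Randić-type indices $\sum d(u)^\alpha d(v)^\alpha$ with $\alpha>0$, and I would follow that argument.
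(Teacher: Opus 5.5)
\textbf{Verdict: correct overall strategy, but Step~2 has a genuine gap.} Your reduction to maximizing $M_2$ (since $F=\sum_v d(v)^3$ is fixed by $\mathscr{D}$) and your exchange identity are correct. They are a cleaned-up form of the paper's own switching argument: its displayed difference $-2(d(u)-d(x))(d(v)-d(y))$ is the same cancellation of squares. The problem is Step~2, which carries the whole proof and is only announced.
\begin{itemize}
\item[(i)] As you anticipate, property (1) can fail with $b$ a \emph{child} of $a$ and $d(b)>d(a)$. Then the parent edges of $a$ and $b$ share the vertex $a$, so no exchange of your Step~1 form exists. Your ``subtree exchange along the path'' is left unspecified.
\item[(ii)] The natural repair replaces $p(a)a,\ bc$ (with $c$ a child of $b$) by $p(a)b,\ ac$. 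Its gain is $(d(p(a))-d(c))(d(b)-d(a))$. Since $c$ lies two levels below $a$, an induction on depth does not give you $d(p(a))\ge d(c)$.
\item[(iii)] The claim that every switch strictly increases the BFS degree vector depends on which violation you repair. For a fixed BFS order, a property-(2) swap in which the larger-degree parent is listed after the smaller one moves the smaller child forward and lowers the vector.
\end{itemize}
So neither the ancestor case nor termination is established.

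\textbf{How to close the gap.} Organize Step~2 around the first bad position rather than around levels.
\begin{itemize}
\item Among all $M_2$-maximizers and all their BFS listings $w_1,\dots,w_n$ (root of maximum degree, children in any order), take one whose vector $(d(w_1),\dots,d(w_n))$ is lexicographically largest.
\item If this vector is not nonincreasing, let $j$ be minimal such that $d(w_k)>d(w_j)$ for some $k>j$. Minimality gives $d(w_i)\ge d(w_m)$ whenever $i<j$ and $m>i$. Let $p$ be the parent of $w_j$; it is listed before position $j$.
\item If $w_k$ is not a descendant of $w_j$, its parent $q$ is listed no earlier than $p$, so $d(p)\ge d(q)$. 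If $q=p$, just reorder the siblings. Otherwise swap the subtrees rooted at $w_j$ and $w_k$, gaining $(d(p)-d(q))(d(w_k)-d(w_j))\ge 0$.
\item If $w_k$ is a descendant of $w_j$, pick a child $c$ of $w_k$ and replace $pw_j,\ w_kc$ by $pw_k,\ w_jc$. The result is again a tree: the two edges lie on one root path in the order $p,w_j,\dots,w_k,c$, which is the second admissible configuration of your exchange and has the same gain formula. The gain $(d(p)-d(c))(d(w_k)-d(w_j))$ is nonnegative by the domination above.
\item In both cases $w_1,\dots,w_{j-1}$ remain a BFS prefix and $w_k$ can be listed in position $j$. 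This contradicts lexicographic maximality, so the vector is nonincreasing, which is exactly $\TT_\mathscr{D}$.
\end{itemize}

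\textbf{Why your handling of ties and your reading of the greedy tree are needed.} The paper never checks that the exchanged graph is a tree. It draws the opposite sign from its own formula. It then concludes $\sigma(T)>\sigma(\TT_\mathscr{D})$ for every non-greedy $T$, which is false. Take $(3,3,3,2,1,1,1,1,1)$ with $A,B,C$ of degree $3$ and $D$ of degree $2$. The greedy tree (internal vertices forming a star centred at $A$) and the caterpillar with internal path $B$--$A$--$C$--$D$ both have $M_2=38$. Your zero-gain moves are the right way to deal with this. Your explicit BFS reading of $\TT_\mathscr{D}$ is also necessary. Under the paper's literal ``smallest current degree'' rule with unfavorable tie-breaking, $(3,3,2,2,1,1,1,1)$ yields a caterpillar with $M_2=30$, while the greedy tree has $M_2=31$.
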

\begin{proof}
Consider $\mathscr{D}=(d_1,d_2,\dots,d_n)$ be any tree degree sequence on $n$ vertices with $d_1\geqslant d_2\geqslant \dots\geqslant d_n\geqslant 1$. Assume $\TT$ be any tree with degree sequence $\mathscr{D}$ that is not the greedy tree $\TT_\mathscr{D}$. Let $v_1$ be a vertex of maximum degree $d_1$. By the monotone adjacency property, its neighbors are precisely the $d_1$ vertices with the next largest degrees. Otherwise, a vertex of larger degree would be adjacent to $v_1$ while a vertex of smaller degree remains unconnected.
Then, we find that if $\TT$ contains two edges $uv$ and $xy$ (with $u \neq x$) such that $d_\TT(u)>d_\TT(x)$, both $u$ and $x$ had remaining degree capacity when $y$ was attached to $x$, and $d_\TT(v)<d_\TT(y)$.
Let $\TT'$ be a tree derived from $\TT$ by remove both of edges $uv$ and $xy$, and add the edges $uy$ and $xv$. Thus, if there exist vertices $u,v,x,y$ such that $d(u) \geq d(v) \geq d(x) \geq d(y)$,
with edges $uv, vx \in E(\TT)$ where $ux, vy \notin E(\TT)$. Then, 
\begin{align*}
 \sigma(\TT)-\sigma(\TT') & = \bigl[(d(u)-d(v))^2+(d(x)-d(y))^2\bigr]-\bigl[(d(u)-d(y))^2+(d(x)-d(v))^2\bigr]\\
 &=-2\,(d(u)-d(x))\,(d(v)-d(y))
\end{align*}
Thus, in this case we find that $\sigma(\TT)>\sigma(\TT')$. This implies that in any $\sigma$-minimal tree, edges respect the degree ordering: higher-degree vertices cannot connect to disproportionately lower-degree vertices when a more balanced pairing is possible. 
Thus, for any two vertices $u,v$ with $d(u) \geq d(v)$, the degrees of the neighbors of $u$ are at least as large as those of the neighbors of $v$, up to permutation.
Therefore, every non-greedy tree can be transformed into the greedy tree by a finite sequence of strictly $\sigma$-decreasing operations. It follows that $\sigma(T)>\sigma(T_\mathscr{D})$. 
This completes the proof.
\end{proof}

The preceding observations highlight the importance of the tree's structural type in determining the value of the sigma index. To elucidate this further, Proposition~\ref{TmuDisu01min} provides a detailed analysis of the sigma index for trees that are neither caterpillars nor terraced trees.

\begin{proposition}~\label{TmuDisu01min}
Let $\mathcal{T}_n$ be the family of all trees on $n$ vertices where $n \geq 3$.
Among all trees on $n$ vertices that are neither caterpillars nor greedy trees. Then, 
\begin{align*}
&\min\{\sigma(\TT)\mid \TT\in \mathcal{T}_n; \text{ if } \TT \text{ is caterpillar tree}\}>\min\{\sigma(\TT)\mid \TT\in \mathcal{T}_n\}\\
&\min\{\sigma(\TT)\mid \TT\in \mathcal{T}_n; \text{ if } \TT \text{ is greedy tree}\}\geqslant \min\{\sigma(\TT)\mid \TT\in \mathcal{T}_n\}.
\end{align*}
 There exist non-caterpillar, non-greedy trees $\TT$ on $n$ vertices such that
 \begin{equation}~\label{eqq1TmuDisu01min}
\min\bigl\{\sigma(\TS)\mid \TS\in\mathcal{T}_n\bigr\}<\sigma(\TT)<\min\bigl\{\sigma(\TS)\mid \TS\in\mathcal{T}_n\bigr\} 
 \end{equation}
 where $\TS$ is a caterpillar tree.
\end{proposition}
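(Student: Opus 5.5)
The plan is to first pin down the extremal value $\min\{\sigma(\TS)\mid \TS\in\mathcal{T}_n\}$ and then treat the three displayed claims one at a time. Every tree on $n\ge 3$ vertices has at least two pendant vertices. Each pendant vertex is adjacent to a vertex of degree at least $2$, so each contributes at least $1$ to $\sigma$, and hence $\sigma(\TS)\ge 2$. The path $P_n$ attains exactly $2$. Therefore the global minimum equals $2$ and is attained by the path. Since $P_n$ is both a caterpillar and the greedy tree of its degree sequence, the greedy inequality (the second claim, "$\geqslant$") follows immediately from Proposition~\ref{jas-pron1} together with this computation.

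The obstacle is the strict caterpillar inequality and the two-sided bound~\eqref{eqq1TmuDisu01min}. Read literally over all of $\mathcal{T}_n$, the computation above shows the caterpillar minimum is also $2$, so the strict inequality fails. Likewise,~\eqref{eqq1TmuDisu01min} has the same set on both sides and so cannot hold. I would therefore prove the statement in the form the surrounding text suggests: all minima are taken over trees realizing a fixed degree sequence $\mathscr{D}$, and the right-hand minimum in~\eqref{eqq1TmuDisu01min} is over caterpillars only. As the natural test case I would take a degree sequence of the type in Problem~\ref{problemnumber1}, with spine degrees $p+1,p+2$ and first- and second-level degrees $1+r$ and $1+s$. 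Here $\mathscr{D}$ has many vertices of degree at least $3$, so no caterpillar can place all high-degree vertices mutually adjacent in the greedy (breadth-first) pattern.

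The key steps, in order, are as follows.
\begin{enumerate}
\item Compute $\sigma(\TT_\mathscr{D})$ for the chosen $\mathscr{D}$. Proposition~\ref{jas-pron1} identifies this value as the class minimum.
\item Compute the caterpillar minimum via Theorem~\ref{basic01sigma}. The quantity $\sum_i (d_i-d_{i+1})^2$ is minimized by ordering the spine degrees monotonically. The cubic terms $(d_1-1)^3+(d_n-1)^3$ together with $\sum_{i=2}^{n-1}(d_i-1)^2(d_i-2)$ are forced, because in a caterpillar every non-spine-neighbour of a spine vertex is a leaf.
\item Show the caterpillar value exceeds $\sigma(\TT_\mathscr{D})$. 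In the greedy tree, high-degree vertices are adjacent to other high-degree vertices rather than to leaves, which replaces terms of size $(d-1)^2$ by smaller terms $(d-d')^2$. This gives the strict caterpillar inequality.
\item Exhibit an intermediate tree $\TT$ that is neither a caterpillar nor greedy. Start from $\TT_\mathscr{D}$ and perform a single edge switch of the form used in the proof of Proposition~\ref{jas-pron1}: replace $uy,xv$ by $uv,xy$. This increases $\sigma$ by exactly $2(d(u)-d(x))(d(v)-d(y))>0$. Choose the switch so that the result still has at least two branching vertices off any path, hence is not a caterpillar, and check that it stays below the caterpillar value from the second step.
\end{enumerate}

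I expect the main difficulty to be the final comparison in the fourth step, that is, bounding one switch increment against the gap found in the third step. I would first try a concrete small instance ($n=3$ spine vertices, $p=r=s=2$), verifying both inequalities numerically, and then show that the gap grows like $r^3$ while the switch increment grows only quadratically. That comparison would yield~\eqref{eqq1TmuDisu01min} for all sufficiently large parameters.
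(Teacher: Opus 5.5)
Your verdict on the statement as written is right, and it is the substantive point. For $n\geqslant 3$ no edge joins two leaves and there are at least two leaves, so $\sigma(\TT)\geqslant 2$ on $\mathcal{T}_n$, with equality for the path $P_n$. Since $P_n$ is a caterpillar, the caterpillar minimum equals the global minimum for every $n\geqslant 3$. So the strict first inequality is false, and \eqref{eqq1TmuDisu01min} asks for a value strictly between $2$ and $2$ under either reading of its right-hand side. The greedy inequality holds, but trivially, since it is a minimum over a subclass; Proposition~\ref{jas-pron1} plays no role there.

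The paper's proof does not escape this. It derives strictness from caterpillars being a proper subclass of $\mathcal{T}_n$ for $n\geqslant 4$. That premise is false for $n\leqslant 6$, because the smallest non-caterpillar is the spider with three legs of length $2$, which has seven vertices. In any case, proper inclusion only yields $\leqslant$ between minima. The proof then produces $\TT^\ast$ with $\mu<\sigma(\TT^\ast)<\mu_c$ from the mere finiteness of the value set, which is a non sequitur; here $\mu=\mu_c=2$. So there is no valid argument to reproduce, and changing the statement, as you do, is unavoidable.

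Your repair has to be existential in $\mathscr{D}$. For $(2,\dots,2,1,1)$ or $(3,3,3,1,1,1,1,1)$ the greedy tree is itself a caterpillar, and the two minima coincide. Within the plan there are three problems:
\begin{enumerate}
\item Step~2 is inaccurate: the pendant terms are not forced. A spine endpoint of degree $d$ carries $d-1$ leaves and an interior one $d-2$, so the choice of endpoints trades off against $\sum(d_i-d_{i+1})^2$, and the monotone order need not be optimal.
\item Step~3 needs strictness, which the $\geqslant$ of Proposition~\ref{jas-pron1} does not give.
\item In step~4 your sign is reversed: replacing $uy,xv$ by $uv,xy$ changes $\sigma$ by $2(d(u)-d(x))(d(y)-d(v))$.
\end{enumerate}

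Your test instance, however, is settled by a direct count, which makes these steps and the asymptotic $r^3$-versus-quadratic comparison unnecessary. For $n=3$, $p=r=s=2$ the tree has $45$ vertices: one vertex $w$ of degree $4$, twenty of degree $3$ and twenty-four leaves. Every tree with this degree sequence in which $w$ has $k$ leaf neighbours satisfies
\[
\sigma(\TT)=(4-k)\cdot 1+9k+4(24-k)=100+4k .
\]
\begin{itemize}
\item The greedy tree, which here is the tree of Problem~\ref{problemnumber1} itself (consistent with Lemma~\ref{TypeLemman1s}), has $k=0$.
\item In a caterpillar, $w$ lies on the spine with at most two spine neighbours, so $k\geqslant 2$ and the caterpillar minimum is $108$.
\item Take your switch with $u=w$, $y$ a child of $w$, and $v$ a leaf whose parent $x$ lies in another branch at $w$. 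This gives $k=1$, so $\sigma=104$. The result still contains three legs of length $2$ at $w$, so it is neither a caterpillar nor greedy.
\end{itemize}
That proves the fixed-$\mathscr{D}$ version completely.
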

\begin{proof}
Assume $\mathcal{T}_n$ be the family of all trees on $n$ vertices where $n \geq 3$. If $\TT$ is a caterpillar tree. Since $\mathcal{T}_n$ is finite, the quantity $\mu=\min\{\sigma(\TT)\mid \TT\in\mathcal{T}_n\}$ is attained by at least one tree. Then, the class of caterpillar trees is a proper subset of $\mathcal{T}_n$ for $n\geqslant 4$. Thus, 
\begin{equation}~\label{eqq2TmuDisu01min}
\mu \leqslant \min\{\sigma(\TT)\mid \TT\in\mathcal{T}_n\},
\end{equation}
the inequality is strict. Similarity,  for greedy trees are constructed to be locally optimal with respect to $\sigma$. Consequently, the value of $\sigma$ attained by any greedy tree cannot be smaller than the global minimum of $\sigma$ over $\mathcal{T}_n$.

Assume $\mu_c=\min\{\sigma(\TT) \mid \TT \in \mathcal{T}_n\},$
where $\TT$ is caterpillar tree, according to~\eqref{eqq2TmuDisu01min} we find that $\mu<\mu_c$.
As $\sigma$ takes only finitely many distinct values on $\mathcal{T}_n$, there exists $\TT^\ast \in \mathcal{T}_n$ such that
\begin{equation}~\label{eqq3TmuDisu01min}
\mu<\sigma(T^\ast)<\mu_c.
\end{equation}

Such a $\TT^\ast$ is necessarily non-caterpillar.
This tree is neither a caterpillar (its spine fails to dominate due to deep branches) nor greedy (it avoids balanced or star-like growth from low-degree vertices). Its value of $\sigma(T)$ exceeds that of typical double-star-like trees because of a more uniform degree distribution, yet remains strictly smaller than that of most caterpillar trees, whose long spines produce many substantial terms of the form $(d(u)-d(v))^2$.

Consequently, there exists a non-empty open interval that separates the global minimum of $\sigma$ from the minimum value of $\sigma$ attained over the class of caterpillar trees. Non-caterpillar, non-greedy trees achieve $\sigma$-values lying strictly inside this interval. Therefore, relations \eqref{eqq2TmuDisu01min} and \eqref{eqq1TmuDisu01min} together imply \eqref{eqq1TmuDisu01min}, which completes the proof.
\end{proof}

\begin{proposition}~\label{pro.sig.1}
The Sigma index of the caterpillar tree $\CC(n,m)$ of order $(n,m)$ is given by:
\[
\sigma(\CC(n,m)) = \begin{cases}
2m^3, & \quad \text{if } n = 2, \\
2m^3 + m\,.n\,(m+1)^2+2, & \quad \text{if } n> 2.
\end{cases}
\]
\end{proposition}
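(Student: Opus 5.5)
The plan is to compute $\sigma(\CC(n,m))$ directly from the definition $\sigma(\GG)=\sum_{uv\in E}(d(u)-d(v))^2$, partitioning the edge set of $\CC(n,m)$ by edge type, exactly as in Figure~\ref{figTypCater01}. Write the spine as $v_1,\dots,v_n$ and attach $m$ pendant vertices to every $v_i$. The degrees are then:
\begin{itemize}
\item $d(v_1)=d(v_n)=m+1$ for the two end spine vertices;
\item $d(v_i)=m+2$ for $2\le i\le n-1$;
\item $1$ for every pendant vertex.
\end{itemize}
The edges split into four classes:
\begin{itemize}
\item pendant edges at the two ends;
\item pendant edges at internal spine vertices;
\item the two terminal spine edges $v_1v_2$ and $v_{n-1}v_n$;
\item the internal spine edges $v_iv_{i+1}$ with $2\le i\le n-2$.
\end{itemize}
Alternatively, one can specialize Theorem~\ref{basic01sigma} to this degree sequence. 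I would do the direct count and use Theorem~\ref{basic01sigma} as a cross-check.

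For $n=2$ there are no internal spine vertices, so the tree is the double star $\mathcal{S}_{m+1,m+1}$. Theorem~\ref{thmpirlllkj} with $k=r=m+1$ gives $m^3+m^3+0=2m^3$, matching the first case. For $n>2$, the four classes contribute as follows:
\begin{itemize}
\item the $2m$ end pendant edges contribute $2m\cdot m^2=2m^3$;
\item the $(n-2)m$ internal pendant edges each contribute $(m+1)^2$;
\item the two terminal spine edges each contribute $(m+2-(m+1))^2=1$;
\item the internal spine edges join two vertices of degree $m+2$ and contribute $0$.
\end{itemize}
Summing gives
\[
\sigma(\CC(n,m))=2m^3+(n-2)\,m\,(m+1)^2+2 .
\]
Theorem~\ref{basic01sigma} yields the same value: $(d_1-1)^3+(d_n-1)^3=2m^3$, the consecutive-difference sum gives $2$, and $\sum_{i=2}^{n-1}(d_i-1)^2(d_i-2)=(n-2)(m+1)^2m$.

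The main obstacle is reconciling this with the displayed formula, which has the coefficient $m\cdot n$ rather than $m(n-2)$. The count above appears to be correct. The smallest case $n=3$, $m=1$ (degrees $2,3,2$ on the spine) has
\[
\sigma=1+1+4+1+1=8,
\]
whereas $2m^3+mn(m+1)^2+2=16$. So either the statement uses a different convention for $n$, for instance counting only internal spine vertices, so that the spine has $n+2$ vertices, or the coefficient is a typo for $n-2$. I would first check whether reading $n$ as the number of internal spine vertices recovers the stated formula; under that reading the $n=2$ case would no longer be the double star, so the two cases would have to be re-examined for consistency. Failing a consistent convention, I would prove the corrected formula with $(n-2)$ and note the discrepancy.
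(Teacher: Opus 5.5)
Your edge count is correct, and so is your conclusion. For $n>2$ the displayed formula is false, and your example $n=3$, $m=1$ (true value $8$, formula value $16$) refutes it. The correct value is $2m^3+(n-2)\,m\,(m+1)^2+2=nm(m+1)^2-4m^2-2m+2$.

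The paper's proof takes the same direct route as you: end spine degrees $m+1$, internal ones $m+2$, contributions summed by edge type. Its own $n=3$ line, $2m^3+m(m+1)^2+2$, agrees with your coefficient $n-2$, not with $mn$. The passage to $m\,n\,(m+1)^2$ is an unjustified extrapolation. It charges all $nm$ pendant edges at weight $(m+1)^2$, so the $2m$ end pendant edges are counted twice, and the result exceeds the truth by exactly $2m(m+1)^2$. The Albertson value $m(m+1)n-2m+2$ in the same proof is correct and equals $2m^2+(n-2)m(m+1)+2$. It therefore uses your counting, which places the error in the $\sigma$ formula alone.

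You can drop the search for another convention. The paper fixes $n$ as the number of spine vertices, with $d(x_1)=d(x_n)=m+1$. If $n$ counted only internal spine vertices, the case $n=2$ would give $2m^3+2m(m+1)^2+2\neq 2m^3$, so no single reading makes both cases true. Prove the corrected formula by your direct count and record the discrepancy with the stated one.

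One small caveat on your cross-check. Theorem~\ref{basic01sigma} is stated for a non-increasing sequence, and its middle sum runs to $n$, invoking $d_{n+1}$. Your spine sequence $(m+1,m+2,\dots,m+2,m+1)$ is not monotone. The theorem only confirms your value if read as a formula over spine degrees in path order, with that sum ending at $n-1$. Your argument does not depend on it.
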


\begin{proof}
Assume the sequence $x_1, x_2, \dots, x_n$ and the pendant vertices $y_{1,1}, \dots, y_{i,j}$ are adjacent to $x_i$. Then $d(x_1) = d(x_n) = m+1$ and $d(x_i)=m+2$ for $2 \leqslant i \leqslant n-1$. Hence, 
\[
\operatorname{irr}(\CC(n,m)) = \begin{cases} 
m(m+1)n - 2m + 2, & \quad \text{if } n \geqslant 3, \\
m(m+1)n - 2m, & \quad \text{if } n \in \{1,2\}.
\end{cases}
\]

For $n = 2$, the Sigma index of the caterpillar tree is given by: $\sigma = 2m^3$.  
For $n = 3$, we have: $\sigma(\CC(n,m)) = 2m^3 + m(m+1)^2+2$, where $d(x_1)=d(x_n)=m+1$ at both ends of $\CC(n,m)$ and $d(x_2)=m+2$ in the central vertex of $\CC(n,m)$.  
Therefore, 
\[
\sigma(\CC(n,m)) = \begin{cases}
2m^3, & \quad \text{if } n = 2, \\
2m^3 + m\,.n\,(m+1)^2+2, & \quad \text{if } n> 2.
\end{cases}
\]

The Sigma index is then computed as $(d(x_1) - d (x_2))^2 + \dots + (d (x_{n+1}) - d (x_n))^2$, which can be expressed as:
\[
\sigma(\CC(n,m)) = \sum_{1=1}^n\left( \sum_{j=1}^n d(x_i) - d (y_{i,j}) \right)^2+\left( \sum_{i=1}^{n-1} d(x_i) - d (x_{i+1}) \right)^2.
\]
\end{proof}

Lemma~\ref{le.segma3} establishes the Sigma index of $\CC$ with degree sequence $\mathscr{D} = (d_1, d_2, d_3)$, under the assumption that $d_3 \geqslant d_2 \geqslant d_1$.

\begin{lemma}\label{le.segma3}
Let $\CC$ be a caterpillar tree with degree sequence $ \mathscr{D} = (d_1, d_2, d_3) $, where $ d_3 \geqslant d_2 \geqslant d_1 $. Then, the Sigma index of $ T $ is given by:
\begin{equation}~\label{eqle.segma3n1}
\sigma(\CC)=\sum_{i=1}^{3} p\,(d_i-1)^2+\sum_{i=1}^{2}(d_i-d_{i+1})^2.
\end{equation}
\end{lemma}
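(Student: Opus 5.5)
The plan is to compute $\sigma(\CC)$ directly from the definition $\sigma(\GG)=\sum_{uv\in E(\GG)}(d_\GG(u)-d_\GG(v))^2$ by splitting the edge set of $\CC$ into two classes. I will read the caterpillar in the sense of Figure~\ref{figTypCater01}: the spine is the path $x_1x_2x_3$ with $d(x_i)=d_i$, and every spine vertex $x_i$ carries $p$ pendant leaves $y_{i,1},\dots,y_{i,p}$. The ordering hypothesis $d_3\geqslant d_2\geqslant d_1$ only fixes the labelling, since $\sigma$ is symmetric in each edge. With this reading,
\[
E(\CC)=\{x_1x_2,\;x_2x_3\}\;\cup\;\{x_iy_{i,j}\mid 1\leqslant i\leqslant 3,\;1\leqslant j\leqslant p\},
\]
and this union is disjoint. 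Therefore $\sigma(\CC)$ is the sum of the spine contribution and the pendant contribution.

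\emph{Spine contribution.} The two spine edges contribute $(d_1-d_2)^2+(d_2-d_3)^2=\sum_{i=1}^{2}(d_i-d_{i+1})^2$.

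\emph{Pendant contribution.} Every pendant vertex $y_{i,j}$ has degree $1$. So each edge $x_iy_{i,j}$ contributes $(d_i-1)^2$, and the $p$ such edges at $x_i$ give $p\,(d_i-1)^2$. Summing over $i=1,2,3$ gives $\sum_{i=1}^{3}p\,(d_i-1)^2$. Adding the two contributions yields \eqref{eqle.segma3n1}.

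The main obstacle is the bookkeeping between $p$ and the degrees $d_i$, rather than any computation. If every spine vertex carries exactly $p$ leaves, then $d_1=d_3=p+1$ and $d_2=p+2$, and the formula is consistent. For a general degree sequence, however, the number of leaves at $x_i$ is $d_i-1$ at the ends and $d_i-2$ at the middle vertex, not $p$. I would handle this in two steps.
\begin{enumerate}
\item State explicitly that $p$ denotes the number of pendant vertices attached to each spine vertex, as in the figure convention for $\CC(n,p)$.
\item Remark that replacing $p$ by the actual leaf count at each $x_i$ recovers the special case $n=3$ of Theorem~\ref{basic01sigma}, namely $(d_1-1)^3+(d_3-1)^3+(d_2-1)^2(d_2-2)+\sum(d_i-d_{i+1})^2$.
\end{enumerate}
The second step serves as a consistency check on the decomposition.
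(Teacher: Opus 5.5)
Your split into the two spine edges and the $3p$ pendant edges is the same argument as the paper's. Its proof simply writes down $p(d_1-1)^2+p(d_2-1)^2+p(d_3-1)^2+(d_1-d_2)^2+(d_2-d_3)^2$ under the same convention of $p$ leaves at every spine vertex, and then compares other spine orderings, which the identity does not need. The arithmetic is fine. The problem is the step you pass over quickly.

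Your claim that $d_3\geqslant d_2\geqslant d_1$ ``only fixes the labelling'' is false. Edge symmetry lets you reverse the spine, i.e.\ swap $d_1$ and $d_3$, but the hypothesis still forces the middle vertex $x_2$ to carry the median degree. Under your convention $d(x_1)=d(x_3)=p+1<p+2=d(x_2)$, so the hypothesis can never hold and the lemma is vacuous. Your remark that ``the formula is consistent'' in this case overlooks exactly this. The other readings do no better:
\begin{itemize}
\item If the $d_i$ are the sorted degrees, then $d_1=d_2=p+1$, $d_3=p+2$, and the spine is $(d_1,d_3,d_2)$. The true spine contribution is $(d_1-d_3)^2+(d_3-d_2)^2=2$, while the formula gives $(d_1-d_2)^2+(d_2-d_3)^2=1$.
\item If the $d_i$ are arbitrary spine degrees satisfying the hypothesis, the identity fails outright. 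Spine degrees $(2,3,3)$ give $\sigma(\CC)=1+0+1+4+8=14$, whereas the right-hand side is $9p+1$, which equals $14$ for no integer $p$.
\end{itemize}

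So your step 1 does not rescue the statement; it makes it vacuous. What your edge split does prove is the version with per-vertex leaf counts. If $x_i$ carries $p_i$ leaves and $d_i=d(x_i)$ in spine order, with no monotonicity assumed, then
\[
\sigma(\CC)=\sum_{i=1}^{3}p_i\,(d_i-1)^2+(d_1-d_2)^2+(d_2-d_3)^2 .
\]
With $p_1=d_1-1$, $p_2=d_2-2$, $p_3=d_3-1$ this is the case $n=3$ of Theorem~\ref{basic01sigma} (your step 2). With $p_1=p_2=p_3=p$ it gives $2p^3+p(p+1)^2+2$. You should state and prove one of these corrected forms rather than present the computation as a proof of the lemma as written. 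The paper's own proof has the same defect.
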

\begin{proof}
Let $\CC$ be a caterpillar tree with an increasing degree sequence $\mathscr{D} = (d_1, d_2, d_3)$ and pendent vertices $p$. To prove \eqref{eqle.segma3n1}, we consider the following valid cases. Suppose the degree sequence is $\mathscr{D}$ ordered such that $(d_1, d_2, d_3)$. Then, we have:
\begin{equation}~\label{eqq1le.segma3}
 \sigma(\CC)=p(d_1-1)^2+p(d_2-1)^2+p(d_3-1)^2+(d_1-d_2)^2+(d_2-d_3)^2  
\end{equation}
If $d_1=d_2=d_3=k$. Then, $\sigma(\CC)=3p\,(k-1)^2$. Now, consider the degree sequence $\mathscr{D}$ ordered such that $(d_2, d_1, d_3)$. We obtain:
\begin{equation}~\label{eqq2le.segma3}
 \sigma(\CC)=p(d_1-1)^2+p(d_2-1)^2+p(d_3-1)^2+(d_2-d_1)^2+(d_1-d_3)^2   
\end{equation}
If $d_1=d_2=d_3=k$. Then, $\sigma(\CC)=3p\,(k-1)^2$. Thus, if the degree sequence $\mathscr{D}$ ordered such that$(d_1, d_3, d_2)$. We obtain:
\begin{equation}~\label{eqq3le.segma3}
 \sigma(\CC)=p(d_1-1)^2+p(d_2-1)^2+p(d_3-1)^2+(d_1-d_3)^2+(d_3-d_2)^2  
\end{equation}
Thus, from~\eqref{eqq2le.segma3}--\eqref{eqq3le.segma3} we find that 
\[
\sigma(\CC)=\begin{cases}
\sigma_{\max}(\CC)=p(d_1-1)^2+p(d_2-1)^2+p(d_3-1)^2+(d_1-d_3)^2+(d_3-d_2)^2 , \\
\sigma_{\min}(\CC)=p(d_1-1)^2+p(d_2-1)^2+p(d_3-1)^2+(d_1-d_2)^2+(d_2-d_3)^2.
\end{cases}
\]
This completes the proof.
\end{proof}
Lemma~\ref{TypeLemman1s} represents the first step in solving Problem~\ref{problemnumber1}, as we use it to demonstrate the interconnection between the three levels in the tree in order to find the value of the sigma index, which is essential for discussing the other cases.

\begin{lemma}~\label{TypeLemman1s}
Consider $\TT$ be a tree given by Problem~\ref{problemnumber1} with spine vertices $v_1,v_2,\dots,v_n$ where $n\geqslant 3$. Then, the $\sigma$-irregularity index of $\TT$ is
\begin{equation}~\label{eqq1TypeLemman1s}
\sigma(\TT)=nprs^3+p(n-2)(p+1-r)^2+2p(p-r)^2+npr(r-s)^2+2.
\end{equation}
\end{lemma}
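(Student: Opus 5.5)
The plan is to compute $\sigma(\TT)$ directly from the definition $\sigma(\TT)=\sum_{uv\in E(\TT)}(d(u)-d(v))^2$. I will partition $E(\TT)$ into four classes according to the levels of the endpoints, count the edges in each class, and note that every edge in a class (or sub-class) contributes the same squared degree difference. No earlier result is really needed beyond the definition. Theorem~\ref{basic01sigma} and Lemma~\ref{le.segma3} only serve as a sanity check on the spine part.

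First I fix the structure implicit in Problem~\ref{problemnumber1}. Each spine vertex $v_i$ carries exactly $p$ first-level pendants. This is consistent with $d(v_1)=d(v_n)=p+1$ (namely $p$ pendants plus one spine neighbour) and with $d(v_i)=p+2$ for $2\leqslant i\leqslant n-1$. Each first-level vertex has degree $1+r$, so it has $r$ second-level children. Each second-level vertex has degree $1+s$, so it has $s$ leaf children. Hence there are $np$ first-level vertices, $npr$ second-level vertices and $nprs$ leaves. The four edge classes are then handled as follows.

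\emph{Spine edges.} There are $n-1$ of them. The two end edges $v_1v_2$ and $v_{n-1}v_n$ each contribute $\bigl((p+2)-(p+1)\bigr)^2=1$. Because $n\geqslant 3$, these are two distinct edges (for $n=3$ they share $v_2$ but are still different). Every internal spine edge joins two vertices of degree $p+2$ and contributes $0$. The spine total is therefore $2$.

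\emph{Spine-to-first-level edges.} The $2p$ edges at $v_1$ and $v_n$ each contribute $\bigl((p+1)-(1+r)\bigr)^2=(p-r)^2$. The $(n-2)p$ edges at internal spine vertices each contribute $(p+1-r)^2$.

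\emph{First-to-second-level edges.} There are $npr$ of them, each contributing $\bigl((1+r)-(1+s)\bigr)^2=(r-s)^2$.

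\emph{Second-level-to-leaf edges.} There are $nprs$ of them, each contributing $s^2$, for a total of $nprs^3$.

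Summing the four classes gives exactly \eqref{eqq1TypeLemman1s}. The only point requiring care is the bookkeeping: distinguishing end from internal spine vertices, both in their degrees and in the counts $2p$ versus $(n-2)p$, and using $n\geqslant3$ so that the spine contributes exactly $2$. I expect this step to be the main (mild) obstacle. As a check, I would verify the formula on a small instance such as $n=3$, $p=r=s=1$: the tree has $3+3+3+3=12$ vertices and $11$ edges, and the edge-by-edge value should agree with \eqref{eqq1TypeLemman1s}.
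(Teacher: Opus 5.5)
Your proposal is correct and follows essentially the same route as the paper: partition the edges by level (spine, spine--first level, first--second level, second level--leaves), count each class, and sum the constant per-edge contributions. Your count for the spine--first-level class, $2p$ edges contributing $(p-r)^2$ and $(n-2)p$ edges contributing $(p+1-r)^2$, is cleaner than the paper's intermediate display $\sigma(\TT_1)=2(p-r)^2+2(p+1-r)^2+2$, which drops these multiplicities even though the paper's final formula is right.
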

\begin{proof}
Assume $\TT$ be a tree with a spine, vertices on three levels, such that the first-level pendants each have degree $1+r$, the second-level pendants each have degree $1+s$, and the third-level pendants (leaves) each have degree $1$. For the spine edges, we find that there are $n-1$ edges. Thus, for  $i=1$ and $i=n-1$ the degrees are $p+1$ and $p+2$ implies that $(p+1-(p+2))^2=1$. For $i=2,\dots,n-2$ both endpoints have degree $p+2$ implies that $(p+2-(p+2))^2=0$. Now, to determine the value of edges between spine vertices and first-level pendants, we noticed that the edges incident to $v_1$ or $v_n$ (there are $2p$ of them), the degrees $p+1$ and $1+r$ implies that $(p+1-(1+r))^2=(p-r)^2$ yields $2\,(p-r)^2$. Also, for edges incident to internal spine vertices $v_2,\dots,v_{n-1}$ (there are $(n-2)p$ of them), the degrees $p+2$ and $1+r$ implies that $(p+2-(1+r))^2=(p+1-r)^2$ yields $2\,(p+1-r)^2$. Thus, 
\begin{equation}~\label{eqq2TypeLemman1s}
\sigma(\TT_1)=2\,(p-r)^2+2\,(p+1-r)^2+2.
\end{equation}
For other levels, there are $npr$ such edges for the second level. Thus, both endpoints have degrees $1+r$ and $1+s$ implies that $((1+r)-(1+s))^2=(r-s)^2$ yields $npr(r-s)^2.$ To determine the edges between second-level pendants and leaves, we noticed that there are $nprs$ such edges. Thus, for the degrees are $1+s$ and $1$ implies that $((1+s)-1)^2=s^2,$ yields $nprs^3.$ Thus, 
\begin{equation}~\label{eqq3TypeLemman1s}
\sigma(\TT_2)=npr(r-s)^2+nprs^3.
\end{equation}
Therefore, from~\eqref{eqq2TypeLemman1s} and \eqref{eqq3TypeLemman1s} we obtain on $\sigma(\TT)=\sigma(\TT_1)+\sigma(\TT_2).$ Thus, the relationship~\eqref{eqq1TypeLemman1s} holds. This completes the proof.
\end{proof}

It is noteworthy that our previous results addressed trees with three levels under certain conditions. 
To strengthen these findings, we modify the conditions in Lemma~\ref{TypeLemman2s}, which elucidates how the indicator's value varies with these changes.

\begin{lemma}~\label{TypeLemman2s}
Consider $\TT$ be a tree with spine vertices $v_1,v_2,\dots,v_n$ where $n\geqslant 3$ and the following degree structure based on Problem~\ref{problemnumber1}. Each first-level pendant vertex has degree $(1+r)^2$. Each second-level pendant vertex has degree $(1+s)^2$ and is adjacent to exactly $(1+s)^2 - 1$ leaves (third-level pendants), each of degree $1$. Then, the $\sigma$-irregularity index of $\TT$ is
\begin{equation}~\label{eqq1TypeLemman2s}
\sigma(\TT)=\mu_0-(1+s)^2)^2+np\,((1+r)^2-1)\,((1+s)^2-1)^3+\mu_1+2,
\end{equation}
where $\mu_0=np\,((1+r)^2-1)\,((1+r)^2$ and $\mu_1=2p\,(p-(1+r)^2+1)^2+p(n-2)\,(p+1-(1+r)^2)^2$.
\end{lemma}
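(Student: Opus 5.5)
The plan is to derive the formula by the same edge-class decomposition used in the proof of Lemma~\ref{TypeLemman1s}. In fact the statement is Lemma~\ref{TypeLemman1s} with the parameters renamed, so I will reduce to it. Put
\[
R=(1+r)^2-1,\qquad S=(1+s)^2-1 .
\]
Then each first-level pendant has degree $1+R$ and each second-level pendant has degree $1+S$. This is exactly the situation of Problem~\ref{problemnumber1}, with $(r,s)$ replaced by $(R,S)$. In particular, each first-level vertex has $R$ children on the second level and each second-level vertex has $S$ leaf children.

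The intended formula contains a misplaced parenthesis. I read the first two terms as $\mu_0$ with
\[
\mu_0=np\bigl((1+r)^2-1\bigr)\bigl((1+r)^2-(1+s)^2\bigr)^2 ,
\]
that is, the term $npR(R-S)^2$. Under this reading the target identity is
\[
\sigma(\TT)=npRS^3+p(n-2)(p+1-R)^2+2p(p-R)^2+npR(R-S)^2+2,
\]
which is \eqref{eqq1TypeLemman1s} after the substitution.

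To keep the argument self-contained, I would still list the four edge classes explicitly and recompute each contribution.

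\textbf{Spine edges.} There are $n-1$ of them. The two end edges $v_1v_2$ and $v_{n-1}v_n$ join degrees $p+1$ and $p+2$, so each contributes $1$. The remaining spine edges join two vertices of degree $p+2$ and contribute $0$. This gives the constant $2$; here $n\ge 3$ is needed so that the two end edges are distinct.

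\textbf{Spine to first level.} The $2p$ edges at $v_1,v_n$ contribute $(p+1-(1+R))^2=(p-(1+r)^2+1)^2$ each. The $(n-2)p$ edges at internal spine vertices contribute $(p+1-(1+r)^2+1)^2$ each. Together these give $\mu_1$. The second term should be read as $(p+2-(1+r)^2)^2$, which is what the printed $(p+1-(1+r)^2)^2$ in $\mu_1$ becomes under the intended reading.

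\textbf{First to second level.} There are $npR$ such edges, each contributing $((1+r)^2-(1+s)^2)^2$. This is $\mu_0$.

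\textbf{Second level to leaves.} There are $npRS$ such edges, each contributing $S^2$. This gives $np((1+r)^2-1)((1+s)^2-1)^3$.

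Summing the four classes gives the claimed expression. The main obstacle is not a calculation but bookkeeping. The degree of a vertex must be correctly converted into its number of children (degree minus one), both for the first-level count $R$ and for the second-level count $S$. The printed formula must also be matched term-by-term with the correct grouping. I would therefore state the corrected reading of $\mu_0$ and $\mu_1$ explicitly before summing.
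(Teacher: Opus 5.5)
Your proof is correct and uses the same edge-class decomposition as the paper (spine edges, spine to level~1, level~1 to level~2, level~2 to leaves), which amounts to Lemma~\ref{TypeLemman1s} with $r\mapsto (1+r)^2-1$ and $s\mapsto (1+s)^2-1$; your reading of $\mu_0$ also matches the paper's proof. Your remark on $\mu_1$ is right, but you should state it as a correction rather than a ``reading'': the paper's proof repeats the printed term $p(n-2)\bigl(p+1-(1+r)^2\bigr)^2$ (as if $r$ in $(p+1-r)^2$ had been replaced by the degree $(1+r)^2$ instead of the child count $(1+r)^2-1$), and since $\bigl(p+2-(1+r)^2\bigr)^2-\bigl(p+1-(1+r)^2\bigr)^2=2p+3-2(1+r)^2$ is odd, the printed formula is false for every admissible $n,p,r$, so what you prove is the corrected identity with $\bigl(p+2-(1+r)^2\bigr)^2$.
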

\begin{proof}
Assume $\TT$ be a tree with a spine, vertices on three levels, according to Lemma~\ref{TypeLemman1s}, the first-level pendants each have degree $(1+r)^2$, the second-level pendants each have degree $(1+s)^2$, and the third-level pendants (leaves) each have degree $1$. Thus, 
\begin{equation}~\label{eqq2TypeLemman2s}
\sigma(\TT_1)=2p\,(p-(1+r)^2+1)^2+p(n-2)\,(p+1-(1+r)^2)^2.
\end{equation}
Similarly, for the first and second level, each of the $np$ first-level pendants has degree $(1+r)^2$, one edge goes to the spine, so it has $(1+r)^2- 1$ children implies that $np((1+r)^2-1)$ and  each such edge connects a vertex of degree $(1+r)^2$ to a vertex of degree $(1+s)^2$ implies that $((1+r)^2-(1+s)^2)^2$ yields $np\,((1+r)^2-1)\,((1+r)^2-(1+s)^2)^2.$ Also, to determine the value of the second level to leaf edges, we find that each of the $np((1+r)^2-1)$ second-level pendants has degree $(1+s)^2$, one edge to its parent (first-level), so it has $(1+s)^2-1$ leaves. Thus there are
$np\,((1+r)^2-1)\,((1+s)^2-1)$ such edges. Therefore, we have 
\begin{equation}~\label{eqq3TypeLemman2s}
\sigma(\TT_2)=np\,((1+r)^2-1)\,((1+s)^2-1)^3.
\end{equation}
Hence, from~\eqref{eqq2TypeLemman2s} and \eqref{eqq3TypeLemman2s} we obtain on $\sigma(\TT)=\sigma(\TT_1)+\sigma(\TT_2).$ Thus, the relationship~\eqref{eqq1TypeLemman2s} holds. This completes the proof.
\end{proof}
To strengthen these results as a solid foundation for future outcomes, we present in Table~\ref{tab001ComparisonVal} a comparative illustration between the values of Lemma~\ref{TypeLemman1s}, represented by \eqref{eqq1TypeLemman1s}, and Lemma~\ref{TypeLemman2s}, represented by \eqref{eqq1TypeLemman2s}. 
We clearly observe a substantial increase in the values of the Sigma index through \eqref{eqq1TypeLemman2s}, which indicates that the sharp bounds we can apply to the Sigma index via \eqref{eqq1TypeLemman1s} do not apply to it via \eqref{eqq1TypeLemman2s}, as the conditions provided differ in both.

\begin{table}[H]
    \centering
\begin{tabular}{|c|c|c|c||c|c|c|c|}
\hline
p & $\sigma(T)$ & $\sigma(T_1)$ & $\sigma(T_1)-\sigma(T)$ & p & $\sigma(T)$ & $\sigma(T_1)$ & $\sigma(T_1)-\sigma(T)$ \\ \hline
3 & 15128 & 30900007 & 30884879 &4 & 43410 & 154915338 & 154871928 \\ \hline
5 & 103212 & 600318883 & 600215671&6 & 215474 & 1935790462 & 1935574988 \\ \hline
7 & 408816 & 1139523935 & 1139115119 &8 & 720738 & 802298194 & 801577456 \\ \hline
9 & 1198820 & 1519059771 & 1517860951 &10 & 1901922 & -910339258 & -912241180 \\ \hline
11 & 2901384 & -406266985 & -409168369 &12 & 4282226 & 1018071450 & 1013789224 \\ \hline
\end{tabular}
\caption{Comparison of value among Lemma~\ref{TypeLemman1s} and \ref{TypeLemman2s} with $n=10$.}
\label{tab001ComparisonVal}
\end{table}

Through Lemma~\ref{TypeLemman3s}, we confirm the results discussed via both Lemmas~\ref{TypeLemman1s} and \ref{TypeLemman2s}. Accordingly, we introduce new conditions based on Problem \ref{problemnumber1}, which yield distinct outcomes. This leads us to induct on the values of the sigma index to reach the general case. Here, we must not overlook the importance of extending the empowerment of these results to extremal trees, as well as for computing the extremal values of the sigma index.

\begin{lemma}~\label{TypeLemman3s}
Let $\TT$ be a tree with spine vertices $v_1,v_2,\dots,v_n$ where $n\geqslant 3$ and the following degree structure based on Problem~\ref{problemnumber1}. Each first-level pendant vertex has degree $2p$. Each second-level pendant vertex has degree $2p^2$ and is adjacent to exactly $2p^2-1$ leaves (third-level pendants), each of degree $1$. Then, the $\sigma$-irregularity index of $\TT$ is
\begin{equation}~\label{eqq1TypeLemman3s}
\sigma(\TT)=np(2p-1)(2p^2-1)^3+2p(1-p)^2+p(n-2)(2-p)^2+\mu+2,
\end{equation}
where $\mu=np(2p-1)(2p-2p^2)^2$.
\end{lemma}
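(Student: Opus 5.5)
We prove \eqref{eqq1TypeLemman3s} the same way as Lemmas~\ref{TypeLemman1s} and \ref{TypeLemman2s}. We partition $E(\TT)$ into four classes according to the levels of the endpoints, compute the common value of $(d(u)-d(v))^2$ on each class, count the edges in each class, and add up. First we fix the degrees. By Problem~\ref{problemnumber1}, $d(v_1)=d(v_n)=p+1$ and $d(v_i)=p+2$ for $2\leqslant i\leqslant n-1$, so every spine vertex carries exactly $p$ first-level pendants. Each first-level vertex has degree $2p$, one edge to its spine parent, and hence $2p-1$ children of degree $2p^2$. Each second-level vertex has one edge to its parent and $2p^2-1$ leaves.

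\textbf{Spine edges.} There are $n-1$ edges $v_iv_{i+1}$. Since $n\geqslant 3$, the two end edges $v_1v_2$ and $v_{n-1}v_n$ are distinct, and each joins degrees $p+1$ and $p+2$, contributing $1$. Every other spine edge joins two vertices of degree $p+2$ and contributes $0$. The total is $2$.

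\textbf{Spine to first level.} The $2p$ edges at $v_1$ and $v_n$ each contribute $(p+1-2p)^2=(1-p)^2$. The $p(n-2)$ edges at internal spine vertices each contribute $(p+2-2p)^2=(2-p)^2$. Together these give $2p(1-p)^2+p(n-2)(2-p)^2$.

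\textbf{First to second level.} There are $np$ first-level vertices, each with $2p-1$ children, so this class has $np(2p-1)$ edges. Each contributes $(2p-2p^2)^2$, giving exactly $\mu=np(2p-1)(2p-2p^2)^2$.

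\textbf{Second level to leaves.} There are $np(2p-1)(2p^2-1)$ such edges, and each contributes $(2p^2-1)^2$. This class gives $np(2p-1)(2p^2-1)^3$.

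Summing the four classes yields \eqref{eqq1TypeLemman3s}. The argument is pure bookkeeping, so the main risk is miscounting. The first point to check is that the spine class contributes exactly $2$; this needs $n\geqslant 3$, since for $n=2$ there is only a single spine edge, joining two vertices of degree $p+1$. The second point is that the multiplicities of the lower levels propagate multiplicatively: $np$, then $np(2p-1)$, then $np(2p-1)(2p^2-1)$. As a sanity check, the total number of edges we counted should equal $|V(\TT)|-1$.
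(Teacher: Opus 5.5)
Your proof is correct and follows the same approach as the paper: a level-by-level edge count with multiplicities $np$, $np(2p-1)$ and $np(2p-1)(2p^2-1)$, each multiplied by its constant squared degree difference. Your explicit accounting of the two end spine edges, which give the $+2$ and need $n\geqslant 3$, is cleaner than the paper's proof, which drops that term from its final sum $\sigma(\TT_1)+\sigma(\TT_2)+\sigma(\TT_3)$ and only restores it in the stated formula.
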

\begin{proof}
Assume  $\TT$ be a tree with spine vertices $v_1,v_2,\dots,v_n$ where $n\geqslant 3$. For the spine vertices and to the first level according to Lemma~\ref{TypeLemman1s} we find that 
\begin{equation}~\label{eqq2TypeLemman3s}
\sigma(\TT_1)=2p\,(1-p)^2+p\,(n-2)(2-p)^2.
\end{equation}
Then, at the second level, each of the $np$ first-level pendant vertices is adjacent to exactly $2p-1$ second-level pendant vertices. Thus, there are $np(2p-1)$ edges connecting the first level to the second level. Each such edge joins a vertex of degree $2p$ to a vertex of degree $2p^2$, contributing $(2p^2 - 2p)^2 = 4p^2(p-1)^2$ to the $\sigma$-irregularity index. Therefore, the total contribution from these edges is
\begin{equation} \label{eqq3TypeLemman3s}
\sigma(\TT_2)=np(2p-1)(2p^2-2p)^2.
\end{equation}
Now consider the edges incident to the third level. Each second-level pendant vertex has degree $2p^2$ and is adjacent to exactly $2p^2-1$ leaves of degree 1. Hence, there are $np(2p-1)(2p^2-1)$ such edges, each contributing $(2p^2-1)^2$ to the $\sigma$-irregularity index. The total contribution from these edges is
\begin{equation} \label{eqq4TypeLemman3s}
\sigma(\TT_3)=np\,(2p-1)(2p^2-1)^3.
\end{equation}

Finally, combining the contributions from the spine and first level in \eqref{eqq2TypeLemman3s} with those from \eqref{eqq3TypeLemman3s} and \eqref{eqq4TypeLemman3s} yields
\[
\sigma(\TT)=\sigma(\TT_1)+\sigma(\TT_2)+\sigma(\TT_3),
\]
which gives~\eqref{eqq1TypeLemman3s} and completes the proof.
\end{proof}

Actually, the $\sigma$-irregularity index of a $k$-level balanced pendant tree had extended from Problem~\ref{problemnumber1} for new Problem~\ref{problemThnumber2}. 
We now extend the result to Theorem~\ref{le.sigma2}, 
which addresses the Sigma index of a tree $\TT$ with degree sequence $\mathscr{D}=(d_1, d_2, d_3, d_4)$, where $d_4 \geqslant d_3 \geqslant d_2 \geqslant d_1$.

\begin{theorem}~\label{le.sigma2}
Let $\CC$ be a caterpillar tree of order $n$, and let $\mathscr{D} = (d_1, d_2, d_3, d_4)$ be a degree sequence, where $d_4 \geqslant d_3 \geqslant d_2 \geqslant d_1$. Then, 
\[
\sigma(\CC)=\begin{cases}
\sigma_{\max}(\CC)=p \sum_{i=1}^{4} (d_i-1)^2+(d_4-d_1)^2+(d_1-d_3)^2+(d_3-d_2)^2, \\
 \sigma_{\min}(\CC)= p \sum_{i=1}^{4} (d_i-1)^2+(d_1-d_2)^2+(d_2-d_3)^2+(d_3-d_4)^2.
\end{cases}
\]
\end{theorem}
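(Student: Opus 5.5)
The plan follows the proof of Lemma~\ref{le.segma3}. First I fix the model: as in Figure~\ref{figTypCater01} and in \eqref{eqle.segma3n1}, the caterpillar $\CC$ has four spine vertices whose degrees are some arrangement of $d_1,\dots,d_4$, and the relevant pendant edges number $p$ at each spine vertex. I will state this convention explicitly, since a genuine tree would have $d_i-1$ or $d_i-2$ pendants at a spine vertex rather than a uniform $p$. The proof then rests on splitting $\sigma(\CC)$ into two parts:
\[
\sigma(\CC)=\underbrace{p\sum_{i=1}^{4}(d_i-1)^2}_{\text{pendant edges}}+\underbrace{\sum_{j=1}^{3}\bigl(d_{\pi(j)}-d_{\pi(j+1)}\bigr)^2}_{\text{spine edges}} .
\]
Here $\pi$ is the order in which the degrees appear along the spine. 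The pendant part does not depend on $\pi$, so the whole problem reduces to optimizing the spine sum $S(\pi)$ over the $4!/2=12$ orderings, counted up to reversal.

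Next I reparametrize by the consecutive gaps $a=d_2-d_1$, $b=d_3-d_2$, $c=d_4-d_3$, all of which are nonnegative. Each $S(\pi)$ is then a sum of three squares of partial sums of $a,b,c$. For the minimum, I use a covering argument. Any Hamiltonian path on the four points $d_1\le d_2\le d_3\le d_4$ of the real line crosses each of the intervals $[d_1,d_2]$, $[d_2,d_3]$, $[d_3,d_4]$ at least once. Squares are superadditive on nonnegative reals: $(x+y)^2\ge x^2+y^2$. Together these give $S(\pi)\ge a^2+b^2+c^2$, with equality for the monotone order. This yields $\sigma_{\min}$ exactly as stated.

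For the maximum, I would simply list all twelve values of $S(\pi)$ as quadratics in $(a,b,c)$ and compare them. Alternatively, a 2-opt exchange argument, like the edge swap in Proposition~\ref{jas-pron1}, can be used: reversing a subpath changes $S$ by $-2(x-y)(u-v)$, which identifies the orders at which $S$ is locally maximal. The stated maximizer $d_4,d_1,d_3,d_2$ gives $(a+b+c)^2+(a+b)^2+b^2$.

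The main obstacle is this maximum case. A quick check shows that the order $d_2,d_4,d_1,d_3$ gives $(b+c)^2+(a+b+c)^2+(a+b)^2$, which exceeds the stated value whenever $b+c>b$, that is, whenever $c>0$. So in the enumeration I expect either to have to restrict the hypotheses (for instance to $d_4=d_3$) or to correct the displayed $\sigma_{\max}$. The general maximizer appears to be the zigzag $d_2,d_4,d_1,d_3$ or $d_3,d_1,d_4,d_2$, depending on whether $a\le c$. I would carry out the twelve-case comparison first, to settle which form is actually true, before writing the final argument.
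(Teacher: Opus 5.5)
Your analysis is right, and it goes further than the paper's own argument: the displayed $\sigma_{\max}$ is false whenever $d_4>d_3$. Under your uniform-$p$ convention the pendant part does not depend on the spine order. Take $(d_1,d_2,d_3,d_4)=(2,3,4,5)$. The claimed maximizer $d_4,d_1,d_3,d_2$ has spine part $9+4+1=14$, whereas $d_2,d_4,d_1,d_3$ has $4+9+4=17$. In general the excess is $(d_4-d_3)(d_4+d_3-2d_2)$, which vanishes only when $d_4=d_3$.

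The paper's proof uses the same split into a pendant part and a spine sum, but it never compares all orders. It restricts, without justification, to the six orders in which $d_3,d_4$ are adjacent. It then names as maximizer the order $(d_4,d_1,d_3,d_2)$, which is not even among those six. For the minimum it offers only the remark that ``adjacent terms are closest''. Your crossing-plus-superadditivity argument is a complete proof of that part.

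One slip to fix: $d_2,d_4,d_1,d_3$ and $d_3,d_1,d_4,d_2$ are reversals of each other, so they give the same caterpillar and the same value. Nothing depends on whether $a\le c$; the zigzag $d_2,d_4,d_1,d_3$ is always a maximizer. You can prove this without the twelve-case table by extending your crossing bookkeeping. Let $N_k$ be the number of spine edges crossing gap $k$, and $N_{kl}$ the number crossing both gaps $k<l$. Then
\[
S(\pi)=N_1a^2+N_2b^2+N_3c^2+2N_{12}ab+2N_{23}bc+2N_{13}ac .
\]
The coefficients satisfy:
\begin{itemize}
\item $N_1,N_{12}\le 2$, since these edges are incident to the vertex carrying $d_1$;
\item $N_3,N_{23}\le 2$, since these edges are incident to the vertex carrying $d_4$;
\item $N_2\le 3$;
\item $N_{13}\le 1$, since only the edge joining $d_1$ and $d_4$ crosses both outer gaps.
\end{itemize}
The zigzag attains all six bounds at once, and the gaps are nonnegative, so the correct statement is
\[
\sigma_{\max}(\CC)=p\sum_{i=1}^{4}(d_i-1)^2+(d_4-d_2)^2+(d_4-d_1)^2+(d_3-d_1)^2 .
\]
This agrees with the paper's formula exactly when $d_3=d_4$.

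Finally, your explicit convention on pendant counts is essential, not cosmetic. With the genuine counts $d_i-1$ at the ends and $d_i-2$ inside, the pendant part depends on which degrees sit at the ends. Then even the minimum claim fails: spine degrees $2,2,2,10$ give $\sigma=794$ in monotone order but $778$ in the order $2,2,10,2$.
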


\begin{proof}
Let $\CC$ be a caterpillar tree of order $n$, and let $\mathscr{D} = (d_1, d_2, d_3, d_4)$ be a non-increasing degree sequence. We aim to determine which vertices at the ends of the sequence represent the largest values, and which vertices in the middle represent the smallest values.  Assume the degrees satisfy $a \leq b \leq c \leq d$, where $c$ and $d$ are the two largest. For any ordering $(x_1,x_2,x_3,x_4)$, the Sigma index is
$$
\sigma(\CC) = p\sum_{i=1}^{4}(x_i-1)^2 + \sum_{i=1}^{3}(x_i-x_{i+1})^2.
$$
The term $p\sum_{i=1}^{4}(x_i-1)^2$ depends only on the multiset $\{a,b,c,d\}$ and remains constant across all permutations. The condition that the two largest degrees are adjacent requires $(c,d)$ or $(d,c)$ to appear consecutively in $(x_1,x_2,x_3,x_4)$. Thus, we noticed that The six corresponding orderings are
$(c,d,a,b),\ (d,c,a,b),\ (a,c,d,b),$ and $(a,d,c,b),\ (a,b,c,d),\ (a,b,d,c).$ For each ordering, according to Lemma~\ref{le.segma3} we noticed that
\begin{equation}~\label{eqq1le.sigma2}
\sigma(\CC)= p\big[(a-1)^2+(b-1)^2+(c-1)^2+(d-1)^2\big] + \mathcal{S},
\end{equation}

where $\mathcal{S}$ denotes the sum of squared consecutive differences:
\begin{align*}
 &\mathcal{S}_{(a,b,c,d)}= (a-b)^2 + (b-c)^2 + (c-d)^2, \quad 
\mathcal{S}_{(a,b,d,c)}= (a-b)^2 + (b-d)^2 + (d-c)^2,\\
&\mathcal{S}_{(a,c,d,b)}= (a-c)^2 + (c-d)^2 + (d-b)^2,\quad
\mathcal{S}_{(a,d,c,b)}= (a-d)^2 + (d-c)^2 + (c-b)^2,\\
&\mathcal{S}_{(c,d,a,b)}= (c-d)^2 + (d-a)^2 + (a-b)^2,\quad
\mathcal{S}_{(d,c,a,b)}= (d-c)^2 + (c-a)^2 + (a-b)^2.
\end{align*}
In each case, the term involving the two largest degrees is either $(c-d)^2$ or $(d-c)^2$, appearing exactly once since $c$ and $d$ are adjacent only once in a sequence of length $4$.
Thus, the maximum value of Sigma index according to~\eqref{eqq1le.sigma2} is 
\begin{equation}
\sigma_{\max}(\CC)=p \sum_{i=1}^{4} (x_i - 1)^2 + (d-a)^2 + (a-c)^2 + (c-b)^2.
\end{equation}
Adjacent terms are closest, minimizing the sum of squared differences. Therefore, the minimum value  is 
\begin{equation}~\label{eqq3le.sigma2}
 \sigma_{\min}(\CC)= p \sum_{i=1}^{4} (x_i - 1)^2 + (a-b)^2 + (b-c)^2 + (c-d)^2.
\end{equation}
Thus, from~\eqref{eqq1le.sigma2}--\eqref{eqq3le.sigma2} holds.
This completes the proof.
\end{proof}
\begin{problem}~\label{problemThnumber2}
Let $\TT$ be a tree with $n \geq 3$ vertices defined by a spine $v_1, v_2, \dots, v_n$ and exactly $k$ levels of pendants with the following degree conditions, for the spine vertices $d(v_1)=d(v_n)=p+1$, $d(v_i)=p+2$ for $i=2,\dots,n-1$ ($1 \leq p \in \mathbb{Z}$), each spine vertex has exactly $p$ children of \textbf{level 1} (first-level pendants), for each $\ell = 1, 2, \dots, k-1$, every vertex at level $\ell$ has degree $d_\ell$ and exactly $d_\ell - 1$ children at level $\ell+1$ and All vertices at level $k$ are leaves, i.e., $d(v) = 1$ for every $v$ at level $k$.   Let $d_0$ degree of the spine vertex to which the level-1 pendant is attached. Then $d_0 = p+1$ for branches attached to $v_1$ or $v_n$ and $d_0 = p+2$ for branches attached to $v_2, \dots, v_{n-1}$.
Define $m_0=p$ (number of level-1 children per spine vertex) and $m_\ell = d_\ell - 1$ for $\ell = 1, 2, \dots, k-1$
(number of pendent vertices each level-$\ell$ vertex has toward level $\ell+1$).  Find the relationship of Sigma index among $\TT$.
\end{problem}

The solution to this problem is provided by Theorem~\ref{maThmn1cater}, which accounts for the two main classes of vertices---spine vertices and pendent vertices---together with their respective degrees. This fundamental result, derived from the arguments in Lemmas~\ref{TypeLemman2s} and \ref{TypeLemman3s}, addresses several cases that will be discussed later in connection with the setting introduced in Problem~\ref{problemThnumber2}.

\begin{theorem}~\label{maThmn1cater}
Let $\TT$ be a tree with $n \geq 3$ vertices given by Problem~\ref{problemThnumber2}, consisting of a spine $v_1, v_2, \dots, v_n$ and exactly $k$ levels of pendants. The $\sigma$-irregularity index is
\begin{equation}~\label{eqq1maThmn1cater}
\sigma(T)=\sum_{\ell=1}^{k-1} np\Bigl(\prod_{j=1}^{\ell-1}(d_j-1)\Bigr)(d_\ell-d_{\ell+1})^2+np\Bigl(\prod_{j=1}^{k-2}(d_j-1)\Bigr)(d_{k-1}-1)^2+\mu,
\end{equation}
where $\mu=2+\,2p(p+1-d_1)^2+p(n-2)(p+2-d_1)^2$.
\end{theorem}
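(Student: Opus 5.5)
The plan is a direct edge-partition count, in the same spirit as the proofs of Lemmas~\ref{TypeLemman1s}--\ref{TypeLemman3s}. Since $\sigma(\TT)=\sum_{uv\in E(\TT)}(d(u)-d(v))^2$ is additive over edges, I would split $E(\TT)$ into the following disjoint classes and sum the contributions:
\begin{enumerate}
\item[(a)] the $n-1$ spine edges;
\item[(b)] the spine-to-level-$1$ edges;
\item[(c)] for each $\ell=1,\dots,k-2$, the level-$\ell$-to-level-$(\ell+1)$ edges;
\item[(d)] the edges from level $k-1$ to the leaves at level $k$.
\end{enumerate}
Inside each class every edge joins vertices of the same two degrees. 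So each contribution is (number of edges)$\times$(one squared difference), and the whole proof reduces to counting vertices per level.

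For (a) and (b) I would reuse the computation of Lemma~\ref{TypeLemman1s} with $1+r$ replaced by $d_1$. Only the two end spine edges $v_1v_2$ and $v_{n-1}v_n$ join degrees $p+1$ and $p+2$, so they contribute $1$ each, giving $2$. The internal spine edges contribute $0$. The $2p$ pendants at $v_1,v_n$ give $2p(p+1-d_1)^2$, and the $p(n-2)$ pendants at internal spine vertices give $p(n-2)(p+2-d_1)^2$. Together these give exactly $\mu$.

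For (c) and (d) I would prove by induction on $\ell$ that level $\ell$ has
\[
N_\ell=np\prod_{j=1}^{\ell-1}(d_j-1)
\]
vertices. The base case is $N_1=np$, since each spine vertex has $p$ level-$1$ children. The step is $N_{\ell+1}=N_\ell(d_\ell-1)$, because every level-$\ell$ vertex has exactly $d_\ell-1$ children. In a tree the edges between levels $\ell$ and $\ell+1$ are in bijection with the level-$(\ell+1)$ vertices, and each has squared difference $(d_\ell-d_{\ell+1})^2$. Hence class (c) contributes $N_{\ell+1}(d_\ell-d_{\ell+1})^2$. Class (d) contributes $N_k(d_{k-1}-1)^2$, and putting $d_k=1$ merges it with the sum. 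Adding (a)--(d) gives the formula.

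The main obstacle is the index bookkeeping in \eqref{eqq1maThmn1cater}. A straightforward count attaches the multiplicity $N_{\ell+1}=np\prod_{j=1}^{\ell}(d_j-1)$, not $N_\ell$, to the difference $(d_\ell-d_{\ell+1})^2$. I would calibrate the indexing against Lemma~\ref{TypeLemman1s} with $k=3$, $d_1=1+r$, $d_2=1+s$. There the level-$1$/level-$2$ edges contribute $npr(r-s)^2=np(d_1-1)(d_1-d_2)^2$, and the leaf edges contribute $nprs\cdot s^2=np(d_1-1)(d_2-1)^3$. This matches $N_2$ and $N_3$ respectively. So the expected main difficulty is fixing the convention: either the product index in \eqref{eqq1maThmn1cater} is read as counting edges rather than upper endpoints, or the displayed products must be shifted by one factor. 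The last-level term must also not be double-counted with the $\ell=k-1$ summand. I would first write the sum in the form $\sum_{\ell=1}^{k-1}N_{\ell+1}(d_\ell-d_{\ell+1})^2$ with $d_k=1$, check it on $k=3$, and then rewrite it in the notation of the statement, adjusting the index ranges as that check dictates.
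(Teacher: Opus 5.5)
Your level-by-level edge partition is the same strategy as the paper's proof, and your counts are correct. The mismatch you anticipate at the end is not a matter of convention, though: it is an error in the statement itself, and in the paper's proof.

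Your first option, reading the product as an edge count, does not work under any reading. The quantity $np\prod_{j=1}^{\ell-1}(d_j-1)$ is the number of level-$\ell$ vertices, i.e.\ the number of edges between levels $\ell-1$ and $\ell$. The edges between levels $\ell$ and $\ell+1$ number $np\prod_{j=1}^{\ell}(d_j-1)$. The paper's proof makes exactly this slip: it sets $\mathcal{E}_{\ell,\ell+1}=np\prod_{j=1}^{\ell-1}(d_j-1)$ and claims there are $np$ edges between levels $1$ and $2$. There is a second problem. The proof uses the generic summand only for $1\leqslant\ell\leqslant k-2$ and treats the leaf edges separately, but the displayed formula runs the sum to $\ell=k-1$. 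So either $d_k$ is undefined, or, taking $d_k=1$, the leaf edges are counted twice.

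A small example settles it. Take $n=3$, $p=1$, $k=3$, $d_1=3$, $d_2=2$, which is the case $r=2$, $s=1$ of Lemma~\ref{TypeLemman1s}. Here $\mu=4$. Counting the spine, spine-to-level-$1$, level $1$--$2$ and level $2$--$3$ edges directly gives $\sigma(\TT)=2+2+6+6=16$, in agreement with Lemma~\ref{TypeLemman1s}. The displayed formula instead gives $4+3+6+6=19$, and still gives $13$ if the duplicated $\ell=k-1$ summand is removed.

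So do not try to adjust the index ranges until you reach the stated formula. Conclude instead that the statement is false as written. Your steps (a)--(d) then prove, with no further gap, the corrected identity
\[
\sigma(\TT)=\mu+\sum_{\ell=1}^{k-1} np\Bigl(\prod_{j=1}^{\ell}(d_j-1)\Bigr)(d_\ell-d_{\ell+1})^2,\qquad d_k:=1,
\]
which for $k=3$, $d_1=1+r$, $d_2=1+s$ reduces to Lemma~\ref{TypeLemman1s}.
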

\begin{proof}
The vertices of $\TT$ are partitioned into levels such that level $0$ consists of the spine vertices, while level $\ell$ satisfying $1\leqslant\ell\leqslant k$ contains all vertices at distance $\ell$ from the spine. Edges exist only between consecutive levels. Hence, we consider each spine vertex has exactly $m_0=p$ neighbors in level $1$. Consequently, the total number of edges between levels $0$ and $1$ is $np$.
For $\ell \geqslant 1$, every vertex in level $\ell$ has exactly $m_\ell=d_\ell-1$ pendent vertices in level $\ell+1$. Thus, the number of edges between levels $\ell$ and $\ell+1$ is
\begin{equation}~\label{eqq2maThmn1cater}
\mathcal{E}_{\ell,\ell+1}=np\,\prod_{j=1}^{\ell-1} (d_j-1).
\end{equation}
Therefore, from~\eqref{eqq2maThmn1cater} we noticed that
\begin{equation}~\label{eqq3maThmn1cater}
\sigma(\TT)=\sigma_{0,1}(\TT)+\sum_{\ell=1}^{k-1}\sigma_{\ell,\ell+1}(\TT),
\end{equation}
where $\sigma_{\ell,\ell+1}$ denotes the the terms arising from the edges connecting levels $\ell$ and $\ell+1$. For $\ell=1$, each spine vertex has exactly $p$ neighbors in level $1$. As there are $n$ spine vertices, the total number of edges between levels $1$ and $2$ is $np$. Each such edge connects a vertex of degree $d_1$ (in level $1$) to a vertex of degree $d_2$ (in level $2$). Then, $\sigma_{1,2}(\TT)=np\,(d_1-d_2)^2.$

Therefore, for $1 \leqslant \ell \leqslant k-2$, each edge connecting a vertex at level $\ell$ to a vertex at level $\ell+1$ contributes $(d_\ell-d_{\ell+1})^2$ to~\eqref{eqq3maThmn1cater}.  Therefore, according to~\eqref{eqq3maThmn1cater} we find that
\begin{equation}~\label{eqq4maThmn1cater}
\sigma_{\ell,\ell+1}(\TT)=np\,\Bigl(\prod_{j=1}^{\ell-1} (d_j-1)\Bigr)(d_\ell-d_{\ell+1})^2.
\end{equation}
Then, from~\eqref{eqq4maThmn1cater} by considering $\ell=k-1$, it satisfies the property that every vertex at level $k$ is a leaf of degree 1. Thus, 
\begin{equation}~\label{eqq5maThmn1cater}
\sigma_{k-1,k}(\TT)=np\,\Bigl(\prod_{j=1}^{k-2} (d_j-1)\Bigr)\,(d_{k-1}-1)^2.
\end{equation}

Now, for establishing $\sigma_{0,1}(\TT)$, we noticed that each edge between a spine vertex and a level-1 vertex contributes $(d_0-d_1)^2$, where $d_0$ denotes the degree of the spine vertex. Hence, two endpoint spine vertices $v_1$ and $v_n$ each have degree $d_0 = p+1$ and $p$ neighbors at level 1. Their combined contribution is therefore  
  $2p(p+1-d_1)^2$. Then, each of the $n-2$ internal spine vertices has degree $d_0 = p+2$ and $p$ neighbors at level 1 yields $p(n-2)(p+2-d_1)^2$. Additionally, the two spine edges $v_1v_2$ and $v_{n-1}v_n$ contribute $2$. Therefore,
\begin{equation}~\label{eqq6maThmn1cater}
\sigma_{0,1}(\TT)=2p(p+1-d_1)^2+p(n-2)(p+2-d_1)^2+2.
\end{equation}
Thus, in this case if we consider $\mu=\sigma_{0,1}(\TT)$ both of~\eqref{eqq5maThmn1cater} and \eqref{eqq6maThmn1cater} implies that the relationship~\eqref{eqq3maThmn1cater}. Therefore, the relationship~\eqref{eqq1maThmn1cater} holds. 
\end{proof}
Both extreme values play a pivotal role in the behavior of the Sigma index, as illustrated in Figure~\ref{fig001discussion}. The maximum value of Sigma index satisfied with the left figure and the minimum value satisfied with the right figure. Note that the extremal value is attained for the degree sequence ordered by $d_n>d_1>\dots>d_2>d_{n-1}$ and $d_n>d_2>\dots>d_{n-1}>d_1$.

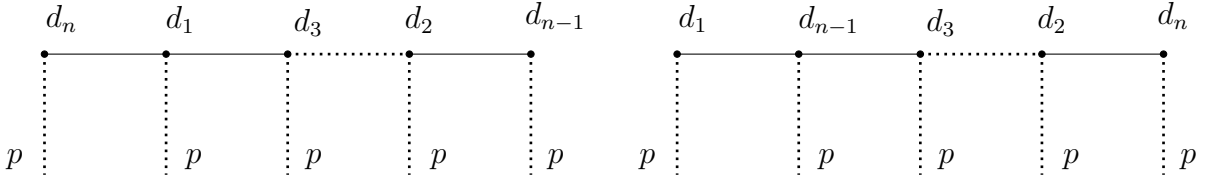
\begin{figure}[H]
    \centering
\begin{tabular}{cc}
\begin{tikzpicture}[scale=.8]
\draw   (1,2)-- (3,2);
\draw   (3,2)-- (5,2);
\draw   (7,2)-- (9,2);
\draw [line width=1pt,dotted] (5,2)-- (7,2);
\draw [line width=1pt,dotted] (3,2)-- (3,0);
\draw [line width=1pt,dotted] (5,2)-- (5,0);
\draw [line width=1pt,dotted] (7,2)-- (7,0);
\draw [line width=1pt,dotted] (9,2)-- (9,0);
\draw [line width=1pt,dotted] (1,2)-- (1,0);
\draw (0.8399938252334064,2.999767639650283) node[anchor=north west] {$d_n$};
\draw (2.816853283710542,2.974744102201205) node[anchor=north west] {$d_1$};
\draw (4.918830429433066,2.9246970273030497) node[anchor=north west] {$d_3$};
\draw (6.745548663215736,2.974744102201205) node[anchor=north west] {$d_2$};
\draw (8.722408121692872,3.0498147145484378) node[anchor=north west] {$d_{n-1}$};
\draw (9.097761183429038,0.6) node[anchor=north west] {$p$};
\draw (7.1709487998500565,0.6) node[anchor=north west] {$p$};
\draw (5.119018729025687,0.6) node[anchor=north west] {$p$};
\draw (3.1421592705485515,0.6) node[anchor=north west] {$p$};
\draw (0.2,0.6) node[anchor=north west] {$p$};
\begin{scriptsize}
\draw [fill=black] (1,2) circle (1.5pt);
\draw [fill=black] (3,2) circle (1.5pt);
\draw [fill=black] (5,2) circle (1.5pt);
\draw [fill=black] (7,2) circle (1.5pt);
\draw [fill=black] (9,2) circle (1.5pt);
\end{scriptsize}
\end{tikzpicture} & \begin{tikzpicture}[scale=.8]
\draw   (1,2)-- (3,2);
\draw   (3,2)-- (5,2);
\draw   (7,2)-- (9,2);
\draw [line width=1pt,dotted] (5,2)-- (7,2);
\draw [line width=1pt,dotted] (3,2)-- (3,0);
\draw [line width=1pt,dotted] (5,2)-- (5,0);
\draw [line width=1pt,dotted] (7,2)-- (7,0);
\draw [line width=1pt,dotted] (9,2)-- (9,0);
\draw [line width=1pt,dotted] (1,2)-- (1,0);
\draw (0.8399938252334064,2.999767639650283) node[anchor=north west] {$d_1$};
\draw (2.816853283710542,2.974744102201205) node[anchor=north west] {$d_{n-1}$};
\draw (4.918830429433066,2.9246970273030497) node[anchor=north west] {$d_3$};
\draw (6.745548663215736,2.974744102201205) node[anchor=north west] {$d_2$};
\draw (8.722408121692872,3.0498147145484378) node[anchor=north west] {$d_n$};
\draw (9.097761183429038,0.6) node[anchor=north west] {$p$};
\draw (7.1709487998500565,0.6) node[anchor=north west] {$p$};
\draw (5.119018729025687,0.6) node[anchor=north west] {$p$};
\draw (3.1421592705485515,0.6) node[anchor=north west] {$p$};
\draw (0.2,0.6) node[anchor=north west] {$p$};
\begin{scriptsize}
\draw [fill=black] (1,2) circle (1.5pt);
\draw [fill=black] (3,2) circle (1.5pt);
\draw [fill=black] (5,2) circle (1.5pt);
\draw [fill=black] (7,2) circle (1.5pt);
\draw [fill=black] (9,2) circle (1.5pt);
\end{scriptsize}
\end{tikzpicture}
    
\end{tabular}
    \caption{The behavior of caterpillar tree $\CC(n,p)$.}
    \label{fig001discussion}
\end{figure}
Theorem~\ref{TnpowPronu4} establishes the initial upper bounds by analyzing the sum of squared differences between the degrees in $\mathscr{D}$, while taking into account the effect of the average degree.

\begin{theorem}~\label{TnpowPronu4}
Let $\CC$ be a caterpillar tree, and let $\mathscr{D} = (d_1, d_2, \dots, d_n)$ be a degree sequence where $d_1 \geqslant d_2 \geqslant \dots \geqslant d_n$, with maximum degree $\Delta$, minimum degree $\delta \geqslant 2$, and let $\lambda_{\mathscr{D}}$ be the average of $\mathscr{D}$. Then, the upper bound of the Sigma index satisfies
\begin{equation}~\label{eq1TnpowPronu4}
\sigma(\CC)\leqslant \sum_{i=1}^{n-1}\lambda_{\mathscr{D}}(d_i-d_{i+1})^3 +2(n^2+m^2)+3m+n+2.
\end{equation}
\end{theorem}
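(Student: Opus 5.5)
The plan is to start from the exact formula of Theorem~\ref{basic01sigma}, split it into a spine part and a pendant part, and bound each part separately by pieces of the right-hand side of \eqref{eq1TnpowPronu4}. I read $\mathscr{D}$ as the degree sequence of the spine vertices, since a tree always has leaves and so $\delta\geqslant 2$ cannot hold for all vertices. I read $m$ as the number of pendant vertices, so that
\[
m=\sum_{i=1}^{n}(d_i-2)+2 .
\]
Theorem~\ref{basic01sigma} then gives
\[
\sigma(\CC)=\underbrace{\sum_{i=1}^{n-1}(d_i-d_{i+1})^2}_{\text{spine}}+\underbrace{(d_1-1)^3+(d_n-1)^3+\sum_{i=2}^{n-1}(d_i-1)^2(d_i-2)}_{\text{pendant}} .
\]
This relies on that theorem's formula; the term $(d_n-d_{n+1})^2$ there is read as absent.

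\textbf{Spine part.} Because $\mathscr{D}$ is non-increasing, each difference $x_i=d_i-d_{i+1}$ is a non-negative integer. Hence $x_i^2\leqslant x_i^3$. Also $\lambda_{\mathscr{D}}\geqslant\delta\geqslant 2$, so in fact
\[
x_i^2\leqslant \tfrac12\lambda_{\mathscr{D}}x_i^3 .
\]
This leaves half of each cubic term $\lambda_{\mathscr{D}}x_i^3$ free. That spare half is deliberate and will be used in the next step.

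\textbf{Pendant part.} Write $d_i=2+e_i$ with $e_i\geqslant 0$, so $m=\sum_i e_i+2$. For the non-cubic contributions I will use $\sum e_i^2\leqslant(\sum e_i)^2\leqslant m^2$ and $\sum e_i\leqslant m$. Expanding $(d_i-1)^2(d_i-2)=e_i^3+2e_i^2+e_i$, the quadratic and linear terms over $i=2,\dots,n-1$ are absorbed by $2m^2+3m$. The end terms $(d_1-1)^3$ and $(d_n-1)^3$ contribute $(1+e)^3$. Their constant and low-order parts go into $2n^2+n+2$, using $n\geqslant 2$ generously.

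\textbf{Cubic terms (main obstacle).} What remains is $\sum_i e_i^3$, and it is not controlled by $m^2$: a single large-degree spine vertex gives $e^3\gg m^2$. Here I plan to use the spare half of the cubic spine terms. The idea is to telescope the excess down from the highest degree to the lowest: since $d_n\geqslant 2$, one has $e_1=\sum_{i=1}^{n-1}x_i+e_n$. I would then use
\[
\Bigl(\sum x_i\Bigr)^3\leqslant (n-1)^2\sum x_i^3
\]
together with $\lambda_{\mathscr{D}}\geqslant 2$, and charge each $e_i^3$ against the consecutive gaps above it.

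The difficulty is that the factor $(n-1)^2$ is too large in general. So the first thing I would try is a case split:
\begin{itemize}
\item if $\Delta\leqslant n$, bound $e_i^3\leqslant n\,e_i^2$ and use the $2n^2$ and $2m^2$ terms;
\item if $\Delta>n$, the single gap containing the jump is large, and $\lambda_{\mathscr{D}}x_i^3$ dominates.
\end{itemize}
If this split does not close, the fallback is to check small cases numerically to pin down the intended meaning of $m$. It may be that $m$ counts edges of $\CC$, which enlarges $m^2$ enough to absorb the cubic terms directly.
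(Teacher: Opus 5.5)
Your proposal has a genuine gap, and it is the step you flagged yourself: the cubic pendant terms $\sum e_i^3$ cannot be charged to the consecutive gaps, because nothing forces the gaps to be large when the degrees are. Under your reading the inequality is in fact false, so no case split can close it. Take all spine degrees equal, $d_1=\cdots=d_n=d$. Then $\sum_{i}\lambda_{\mathscr{D}}(d_i-d_{i+1})^3=0$, while each of the $m=n(d-2)+2$ leaf edges contributes $(d-1)^2$, so
\[
\sigma(\CC)=m\,(d-1)^2 .
\]
This is cubic in $d$ for fixed $n$, whereas $2(n^2+m^2)+3m+n+2$ is only quadratic in $d$.

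The small-case check you list as a fallback settles it. For the double star with $\mathscr{D}=(7,7)$ you get $\sigma(\CC)=12\cdot 36=432$. The right-hand side is $336$ with $m=12$ leaves, and still only $389$ if $m=13$ counts edges; in general $m=n(d-1)+1$ under that reading, which changes nothing asymptotically. Your branch $\Delta>n$ presupposes a large jump in $\mathscr{D}$, and here there is none. The only reading under which the statement holds is the literal one you rejected: $\delta\geqslant 2$ for the full degree sequence of a tree is impossible, so the claim is vacuous.

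The paper's own proof gives no way around this. It opens with the assertions $n>\sum_{i}(d_i-d_{i+1})^2$ and $2n>\sum_i(d_i-d_{i+1})^3$, which already fail for $\mathscr{D}=(10,2)$. From them it derives a \emph{lower} bound on $\sigma(\CC)$, then chains unjustified estimates into the stated upper bound. It never estimates the pendant contributions $(d_1-1)^3+(d_n-1)^3+\sum_{i=2}^{n-1}(d_i-1)^2(d_i-2)$, which is precisely where the inequality breaks.

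What your decomposition does prove correctly, with the spine laid out in the order of $\mathscr{D}$ as in Theorem~\ref{basic01sigma}, is
\[
\sigma(\CC)\leqslant \frac12\sum_{i=1}^{n-1}\lambda_{\mathscr{D}}(d_i-d_{i+1})^3+m(\Delta-1)^2 ,
\]
since each of the $m$ leaf edges contributes at most $(\Delta-1)^2$. The example above shows that a valid bound must either keep a term of this cubic order in the degrees or add a hypothesis bounding $\Delta$ in terms of $n$.
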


\begin{proof}
Assume $\mathscr{D}=(d_1,d_2,\dots,d_n)$ is a degree sequence where $d_1 \geqslant d_2 \geqslant \dots \geqslant d_n$, with maximum degree $\Delta$, minimum degree $\delta \geqslant 2$, and average degree $\lambda_{\mathscr{D}}$. Clearly, $n$ satisfies 
$n > \sum_{i=1}^{n-1}(d_i - d_{i+1})^2$ and $2n > \sum_{i=1}^{n-1}(d_i - d_{i+1})^3.$ 
These $n$-related terms establish the lower bounds for the Sigma index. Then
\begin{equation}~\label{eq2TnpowPronu4}
\sigma(\CC) \geqslant \sum_{i=1}^{n-1}\lambda_{\mathscr{D}} (d_i-d_{i+1})^3+2n^2+2.
\end{equation}
We compare the term $2n^3+n^2-2n+1$ with the term $\Delta(\Delta-1)^2$ by considering the value of $m-1,$
\[
0 < \frac{1}{m-1}\left(\frac{2n^3 + n^2 - 2n + 1}{\Delta(\Delta - 1)^2}\right) < 3. 
\]
Based on this reduction, the lower bound~\eqref{eq2TnpowPronu4} is difficult to determine within a certain bound. Thus, we have $\Delta (n - \Delta)^2 - \Delta (m - \Delta)^2 \geqslant \sum_{i=1}^{n-1}\lambda_{\mathscr{D}} (d_i - d_{i+1})^3,$
and for the term $\Delta(\Delta - 1)^2$ compared with the term $(2m^2 + 2m + n) / 2(n + m)$, considering that $\sigma(\CC) < 2m^2 + 2m + n$ holds, we obtain
\[
(d_1 - d_n)^2 \geqslant \frac{1}{\Delta(\Delta - 1)^2} \left(\frac{2m^4 + 2m + n}{2(n + m)} \right).
\]
The sharp lower bound of the Sigma index satisfies the inequality by considering~\eqref{eq2TnpowPronu4} and the value of the term $(d_1 - d_n)^2$,
\begin{equation}~\label{eq3TnpowPronu4}
\sigma(\CC) > \frac{1}{\Delta(\Delta - 1)^2} \left(\frac{2m^4 + 2m + n}{2(n + m)} + 2nm + (d_1 - d_n)^2 \right).
\end{equation}
Thus, relationship~\eqref{eq3TnpowPronu4} implies that $\Delta (2n - m)^2$ satisfies the lower bound according to the value of $\lambda_{\mathscr{D}}$ and the upper bound according to the value $d_i^2$. Hence,
\begin{equation}~\label{eq4TnpowPronu4}
\sum_{i=1}^{n-1} \lambda_{\mathscr{D}} (d_i - d_{i+1})^3 \leqslant \frac{1}{4}\left( \frac{2m^4 + 2m + n}{2 \Delta (n + m)(2n - m)^2} + 2nm + (d_1 - d_n)^2 \right) \leqslant 3 \sum_{i=1}^n d_i^2 - (n + m),
\end{equation}
where
\[
2 \leqslant \frac{1}{4} \left( \frac{2m^4 + 2m + n}{2 \Delta (n + m)(2n - m)^2} \right) \leqslant 3.
\]
Therefore, from~\eqref{eq3TnpowPronu4} and \eqref{eq4TnpowPronu4}, the relationship with~\eqref{eq2TnpowPronu4} implies~\eqref{eq1TnpowPronu4}.
\end{proof}

\section{Conclusion}\label{sec5}
In this paper, we investigate a key problem concerning trees that differ from Problem~\ref{problemnumber1} and Problem~\ref{problemThnumber2}. We resolve Problem~\ref{problemnumber1}, rigorously proving that the sigma index satisfies the relation
\[
\sigma(\TT)=nprs^3+p(n-2)(p+1-r)^2+2p(p-r)^2+npr(r-s)^2+2.
\]
Similarly, we address Problem~\ref{problemThnumber2}, which establishes the result under the conditions specified 
\[
\sigma(T)=\sum_{\ell=1}^{k-1} np\Bigl(\prod_{j=1}^{\ell-1}(d_j-1)\Bigr)(d_\ell-d_{\ell+1})^2+np\Bigl(\prod_{j=1}^{k-2}(d_j-1)\Bigr)(d_{k-1}-1)^2+\mu.
\]
These findings lay a foundational cornerstone in graph theory, particularly for topological indices of trees and their extremal values. This work presents a unified analytical framework for comparing linear and quadratic irregularity measures in trees, featuring precise formulas validated on fixed-size examples and general sequences. The framework strengthens the theoretical foundations of irregularity indices and provides a reference for further studies of structured families, such as caterpillar trees.

\section*{Acknowledgements}
The authors would like to express their sincere gratitude to the anonymous reviewers for their insightful comments and constructive suggestions, which have substantially enhanced the clarity and quality of this manuscript.

\section*{Declarations}
\begin{itemize}
	\item Funding: Not Funding.
	\item Conflict of interest/Competing interests: The author declare that there are no conflicts of interest or competing interests related to this study.
	\item Ethics approval and consent to participate: The author contributed equally to this work.
	\item Data availability statement: All data is included within the manuscript.
\end{itemize}

\end{document}